 \newtheorem{thm}{Theorem}[section]
 \newtheorem{lem}[thm]{Lemma}
 \newtheorem{prop}[thm]{Proposition}
 \newtheorem{defn}[thm]{Definition}
 \newtheorem{rem}[thm]{Remark}
 \newcommand{\Hom}{\mathrm{Hom}}
\title{Homological properties of $3$-dimensional DG Sklyanin algebras}
\author{X.-F. Mao}
\address{Department of Mathematics, Shanghai University, Shanghai 200444, China}
\email{xuefengmao@shu.edu.cn}
\author{H. Wang}
\address{Department of Mathematics, Shanghai University, Shanghai 200444, China}
\email{happywang97@shu.edu.cn}
\author{X.-T. Wang}
\address{Department of Mathematics, Howard University, Washington DC, 20059, USA}
\email{xingting.wang@Howard.edu}
\author{Y.-N. Yang}
\address{Department of Mathematics, Shanghai University, Shanghai 200444, China}
\email{mooly@shu.edu.cn}
\author{M.-Y.Zhang}
\address{Department of Mathematics, Shanghai University, Shanghai 200444, China}
\email{zmy1023@shu.edu.cn}
\date{}
\subjclass[2010]{Primary 16E45, 16E65, 16W20,16W50}
\keywords{Calabi-Yau property, cochain DG algebra, Sklyanin algebra}
\begin{document}

\begin{abstract}
In this paper, we introduce
 the notion of DG Sklyanin algebras, which are connected cochain DG algebras whose
underlying graded algebras are Sklyanin algebras. Let $\mathcal{A}$ be a $3$-dimensional DG Sklyanin algebra with $\mathcal{A}^{\#}=S_{a,b,c}$, where $(a,b,c)\in \Bbb{P}_k^2-\mathfrak{D}$ and
$$\mathfrak{D}=\{(1,0,0), (0,1,0),(0,0,1)\}\sqcup\{(a,b,c)|a^3=b^3=c^3\}.$$
We systematically study its differential structures and various homological properties.
Especially, we figure out the conditions for $\mathcal{A}$ to be Calabi-Yau, Koszul, Gorenstein and homologically smooth, respectively.

\end{abstract}

\maketitle

\section{introduction}
The theory of differential graded algebras (DG algebras, for short) and their modules has numerous applications in rational homotopy theory as well as algebraic geometry. In particular, general results in DG homological algebra depend on the constructions of some interesting families of DG algebras. In the literature, there has been many papers on graded commutative DG algebras. Especially, the Sullivan algebra and De Rham complex are fundamental DG algebra models in rational homotopy theory and differential geometry, respectively.
Comparatively speaking, less attention has been paid to non-commutative DG algebras. To change this situation, many attempts have been made to construct some interesting family of non-commutative cochain DG algebras with some nice homological properties such as homologically smoothness, Gorensteinness and Calabi-Yau property. In \cite{MHLX}, \cite{MGYC} and \cite{MXYA}, DG down-up algebras,  DG polynomial algebras and DG free algebras are introduced and  systematically studied, respectively. It is exciting to discover that non-trivial DG down-up algebras, some DG polynomial algebras and DG free algebras with $2$ degree $1$ variables are Calabi-Yau DG algebras. Since Ginzburg introduced Calabi-Yau (DG) algebras in \cite{Gin}, they have been extensively studied due to their links to
mathematical physics, representation theory  and non-commutative algebraic geometry. In general, the homological properties of a DG algebra are determined by the joint effects of its underlying graded algebra structure and differential structure. Although there have been some discriminating methods (cf.\cite{HM,MYY}), it is still difficult in general to  detect the Calabi-Yau property of a cochain DG algebra. Those newly discovered examples of Calabi-Yau DG algebras among DG down-up algebras, DG polynomial algebras and DG free algebras inspire us to construct cochain DG algebras on some well-known Artin-Schelter regular algebras.

The $3$-dimensional Sklyanin algebras form the most important class of Artin-Schelter regular algebras of global dimension $3$. Let $k$ be an algebraically closed field of characteristic $0$ and $\mathfrak{D}$ the subset of the projective plane $\Bbb{P}_k^2$ consisting of the $12$ points:
$\mathfrak{D}:=\{(1,0,0), (0,1,0),(0,0,1)\}\sqcup\{(a,b,c)|a^3=b^3=c^3\}.$
Recall that the points $(a,b,c)\in \Bbb{P}_k^2-\mathfrak{D}$ parametrize the $3$-dimensional Sklyanin algebras,
$$S_{a,b,c}=\frac{k\langle x,y,z\rangle}{(f_1,f_2,f_3)},$$
where \begin{align*} f_1&=ayz+bzy+cx^2\\
                     f_2&=azx+bxz+cy^2\\
                     f_3&=axy+byx+cz^2.
                     \end{align*}
We say that a cochain DG algebra $\mathcal{A}$ is a $3$-dimensional Sklyanin DG algebra if its underlying graded algebra $\mathcal{A}^{\#}$ is a $3$-dimensional Sklyanin algebra $S_{a,b,c}$, for some $(a,b,c)\in \Bbb{P}_k^2-\mathfrak{D}$.
We describe all possible differential structures on a $3$-dimensional Sklyanin DG algebra by the following theorem (cf.Theorem \ref{diffstr}):
 \\
\begin{bfseries}
Theorem\,A.
\end{bfseries}
Let $\mathcal{A}$ be a $3$-dimensional DG Sklyanin algebra with $\mathcal{A}^{\#}=S_{a,b,c}$, $(a,b,c)\in \Bbb{P}_k^2-\mathfrak{D}$.
Then we have the following statements:

(1)$\partial_{\mathcal{A}}=0$ if either $|a|\neq |b|$ or $c\neq 0$.

(2)$\partial_{\mathcal{A}}$ is defined by
\begin{align*}
\begin{cases}
\partial_{\mathcal{A}}(x)=\alpha x^2+\beta xy +\gamma xz\\
\partial_{\mathcal{A}}(y)=\alpha yx +\beta y^2+\gamma yz\\
\partial_{\mathcal{A}}(z)=\alpha xz +\beta yz +\gamma z^2,\, \text{for some}\,(\alpha,\beta,\gamma)\in \Bbb{A}_k^3, \,\text{if}\,a=-b, c=0.
\end{cases}
\end{align*}

(3)$\partial_{\mathcal{A}}$ is defined by
\begin{align*}
\left(
                         \begin{array}{c}
                           \partial_{\mathcal{A}}(x)\\
                           \partial_{\mathcal{A}}(y)\\
                           \partial_{\mathcal{A}}(z)
                         \end{array}
                       \right)=M\left(
                         \begin{array}{c}
                           x^2\\
                           y^2\\
                           z^2
                         \end{array}
                       \right), \text{for some}\, M\in M_3(k), \,\text{if}\,a=b, c=0.
\end{align*}

The motivation of this paper is to study the various homomological properties of $3$-dimensional DG Sklyanin algebras.
 Recall that a square matrix is called a quasi-permutation matrix if each row and each column has at most one non-zero element, and $\mathrm{QPL}_3(k)$ is the subgroup of $\mathrm{GL}_3(k)$ consisting of quasi-permutation matrices.
 We have the following theorem (cf. Proposition \ref{firstcase},Proposition \ref{polydg} and Proposition \ref{quancase}).
\\
\begin{bfseries}
Theorem\,B.
\end{bfseries}
Let $\mathcal{A}$ be a $3$-dimensional DG Sklyanin algebra with $\mathcal{A}^{\#}=S_{a,b,c}$, $(a,b,c)\in \Bbb{P}_k^2-\mathfrak{D}$.
Then we have the following statements.
\begin{enumerate}
\item If either $|a|\neq |b|$ or $c\neq 0$, then $\mathcal{A}$ is a Koszul Calabi-Yau DG algebra with zero differential.
\item If $a=-b, c=0$, then $\mathcal{A}$ is a Koszul Calabi-Yau DG algebra when $\partial_{\mathcal{A}}=0$, and $\mathcal{A}$ is Gorenstein but neither Koszul nor homologically smooth when $\partial_{\mathcal{A}}\neq 0$.
\item If $a=b, c=0$, and $\partial_{\mathcal{A}}$ is determined by a matrix $M\in M_3(k)$ as in Theorem A, then $\mathcal{A}$ is Koszul, and it is  not Calabi-Yau (or not homologically smooth, or not Gorenstein) if and only if there exists $C=(c_{ij})_{3\times 3}\in \mathrm{QPL}_3(k)$ satisfying $M=C^{-1}N(c_{ij}^2)_{3\times 3}$,
where $$N=\left(
                                 \begin{array}{ccc}
                                   1 & 1 & 0 \\
                                   1 & 1 & 0 \\
                                   1 & 1 & 0 \\
                                 \end{array}
                               \right)\,\,\,\, \text{or}\,\,\,\, N=\left(
                                 \begin{array}{ccc}
                                   n_{11} & n_{12} & n_{13} \\
                                   l_1n_{11} & l_1n_{12} & l_1n_{13} \\
                                   l_2n_{11} & l_2n_{12} & l_2n_{13} \\
                                 \end{array}
                               \right)$$
                            with  $n_{12}l_1^2+n_{13}l_2^2\neq n_{11}, l_1l_2\neq 0$ and $4n_{12}n_{13}l_1^2l_2^2= (n_{12}l_1^2+n_{13}l_2^2-n_{11})^2$.
\end{enumerate}

----------------------------------------------------------------------
\section{Notations and conventions}
Throughout this paper, $k$ is an algebraically closed field of characteristic $0$. For any $k$-vector space $V$, we write $V^*=\Hom_{k}(V,k)$. Let $\{e_i|i\in I\}$ be a basis of a finite dimensional $k$-vector space $V$.  We denote the dual basis of $V$ by $\{e_i^*|i\in I\}$, i.e., $\{e_i^*|i\in I\}$ is a basis of $V^*$ such that $e_i^*(e_j)=\delta_{i,j}$. For any graded vector space $W$ and $j\in\Bbb{Z}$, the $j$-th suspension $\Sigma^j W$ of $W$ is a graded vector space defined by $(\Sigma^j W)^i=W^{i+j}$.

A cochain DG algebra is
a graded
$k$-algebra $\mathcal{A}$ together with a differential $\partial_{\mathcal{A}}: \mathcal{A}\to \mathcal{A}$  of
degree $1$ such that
\begin{align*}
\partial_{\mathcal{A}}(ab) = (\partial_{\mathcal{A}} a)b + (-1)^{|a|}a(\partial_{\mathcal{A}} b)
\end{align*}
for all graded elements $a, b\in \mathcal{A}$.
For any DG algebra $\mathcal{A}$,  we denote $\mathcal{A}\!^{op}$ as its opposite DG
algebra, whose multiplication is defined as
 $a \cdot b = (-1)^{|a|\cdot|b|}ba$ for all
graded elements $a$ and $b$ in $\mathcal{A}$.

 Let $\mathcal{A}$ be
a cochain DG algebra.  We denote by $\mathcal{A}^i$ its $i$-th homogeneous component.  The differential $\partial_{\mathcal{A}}$ is a sequence of linear maps $\partial_{\mathcal{A}}^i: \mathcal{A}^i\to \mathcal{A}^{i+1}$ such that $\partial_{\mathcal{A}}^{i+1}\circ \partial_{\mathcal{A}}^i=0$, for all $i\in \Bbb{Z}$.  If $\partial_{\mathcal{A}}\neq 0$, $\mathcal{A}$ is called
non-trivial. The cohomology graded algebra of $\mathcal{A}$ is the graded algebra $$H(\mathcal{A})=\bigoplus_{i\in \Bbb{Z}}\frac{\mathrm{ker}(\partial_{\mathcal{A}}^i)}{\mathrm{im}(\partial_{\mathcal{A}}^{i-1})}.$$
 For any cocycle element $z\in \mathrm{ker}(\partial_{\mathcal{A}}^i)$, we write $\lceil z \rceil$ as the cohomology class in $H(\mathcal{A})$ represented by $z$. One sees that $H(\mathcal{A})$ is a connected graded algebra if $\mathcal{A}$ is a connected cochain DG algebra.
  The derived category of left DG modules over $\mathcal{A}$ (DG $\mathcal{A}$-modules for short) is denoted by $\mathscr{D}(\mathcal{A})$.  A DG $\mathcal{A}$-module  $M$ is compact if the functor $\Hom_{\mathscr{D}(A)}(M,-)$ preserves
all coproducts in $\mathscr{D}(\mathcal{A})$.
 By \cite[Proposition 3.3]{MW1},
a DG $\mathcal{A}$-module  is compact if and only if it admits a minimal semi-free resolution with a finite semi-basis. The full subcategory of $\mathscr{D}(\mathcal{A})$ consisting of compact DG $\mathcal{A}$-modules is denoted by $\mathscr{D}^c(\mathcal{A})$.

  A cochain algebra $\mathcal{A}$ is called connected if its underlying graded algebra $\mathcal{A}^{\#}$ is a connected graded algebra. For any connected DG algebra $\mathcal{A}$,
   we write $\frak{m}$ as the maximal DG ideal $\mathcal{A}^{>0}$ of $\mathcal{A}$.
Via the canonical surjection $\varepsilon: \mathcal{A}\to k$, $k$ is both a DG
$\mathcal{A}$-module and a DG $\mathcal{A}\!^{op}$-module. It is easy to check that the enveloping DG algebra $\mathcal{A}^e = \mathcal{A}\otimes \mathcal{A}\!^{op}$ of $\mathcal{A}$
is also a connected cochain DG algebra with $H(\mathcal{A}^e)\cong H(\mathcal{A})^e$, and $$\frak{m}_{\mathcal{A}^e}=\frak{m}_{\mathcal{A}}\otimes \mathcal{A}^{op} + \mathcal{A}\otimes
\frak{m}_{\mathcal{A}^{op}}.$$ We have the following list of homological properties for DG algebras.
\begin{defn}\label{basicdef}
{\rm Let $\mathcal{A}$ be a connected cochain DG algebra.
\begin{enumerate}
\item  If $\dim_{k}H(R\Hom_{\mathcal{A}}(k,\mathcal{A}))=1$ (resp.$\dim_{k}H(R\Hom_{\mathcal{A}^{op}}(k,\mathcal{A}))=1$), then $\mathcal{A}$ is called left (resp. right) Gorenstein (cf. \cite{FHT1});
\item  If ${}_{\mathcal{A}}k$, or equivalently ${}_{\mathcal{A}^e}\mathcal{A}$, has a minimal semi-free resolution with a semi-basis concentrated in degree $0$, then $\mathcal{A}$ is called Koszul (cf. \cite{HW});
\item If ${}_{\mathcal{A}}k$, or equivalently the DG $\mathcal{A}^e$-module $\mathcal{A}$ is compact, then $\mathcal{A}$ is called homologically smooth (cf. \cite[Corollary 2.7]{MW3});
\item If $\mathcal{A}$ is homologically smooth and $$R\Hom_{\mathcal{A}^e}(\mathcal{A}, \mathcal{A}^e)\cong
\Sigma^{-n}\mathcal{A}$$ in  the derived category $\mathscr{D}((\mathcal{A}^e)^{op})$ of right DG $\mathcal{A}^e$-modules, then $\mathcal{A}$ is called an $n$-Calabi-Yau DG algebra  (cf. \cite{Gin,VdB}).
\end{enumerate}}
 \end{defn}
 The motivation of this paper is to study when a $3$-dimensional DG Sklyanian algebra has these homological properties in Definition \ref{basicdef}.

\section{differential structures}
In this section, we determine all possible differential structures of a $3$-dimensional DG Sklyanin algebra $\mathcal{A}$.
 Let $\mathfrak{D}$ be the subset of the projective plane $\Bbb{P}_k^2$ consisting of the $12$ points:
$$\mathfrak{D}:=\{(1,0,0), (0,1,0),(0,0,1)\}\sqcup\{(a,b,c)|a^3=b^3=c^3\}.$$
Then there exists some $(a,b,c)\in \Bbb{P}_k^2-\mathfrak{D}$ such that $\mathcal{A}^{\#}=S_{a,b,c}$. We have the following proposition on the differential $\partial_{\mathcal{A}}$ of $\mathcal{A}$.
 \begin{thm}\label{diffstr}
Let $\mathcal{A}$ be a $3$-dimensional DG Sklyanin algebra with $\mathcal{A}^{\#}=S_{a,b,c}$, $(a,b,c)\in \Bbb{P}_k^2-\mathfrak{D}$.
Then we have the following statements:

(1)$\partial_{\mathcal{A}}=0$ if either $|a|\neq |b|$ or $c\neq 0$.

(2)$\partial_{\mathcal{A}}$ is defined by
\begin{align*}
\begin{cases}
\partial_{\mathcal{A}}(x)=\alpha x^2+\beta xy +\gamma xz\\
\partial_{\mathcal{A}}(y)=\alpha yx +\beta y^2+\gamma yz\\
\partial_{\mathcal{A}}(z)=\alpha xz +\beta yz +\gamma z^2,\, \text{for some}\,(\alpha,\beta,\gamma)\in \Bbb{A}_k^3, \,\text{if}\,a=-b, c=0.
\end{cases}
\end{align*}

(3)$\partial_{\mathcal{A}}$ is defined by
\begin{align*}
\left(
                         \begin{array}{c}
                           \partial_{\mathcal{A}}(x)\\
                           \partial_{\mathcal{A}}(y)\\
                           \partial_{\mathcal{A}}(z)
                         \end{array}
                       \right)=M\left(
                         \begin{array}{c}
                           x^2\\
                           y^2\\
                           z^2
                         \end{array}
                       \right), \text{for some}\, M\in M_3(k), \,\text{if}\,a=b, c=0.
\end{align*}
\end{thm}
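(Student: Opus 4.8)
The plan is to turn the determination of $\partial_{\mathcal{A}}$ into a finite-dimensional linear-algebra problem and then read off the answer from how the rank of a parameter-dependent system behaves. Since $x,y,z$ have degree $1$ and $\partial_{\mathcal{A}}$ raises degree by $1$, each of $\partial_{\mathcal{A}}(x),\partial_{\mathcal{A}}(y),\partial_{\mathcal{A}}(z)$ lies in $(S_{a,b,c})_2$, which is $6$-dimensional; fixing a monomial PBW basis of $(S_{a,b,c})_2$ (obtained by using the three relations to eliminate three of the nine quadratic monomials, the admissible choice depending on which of $a,b$ is nonzero, and at least one is since $(a,b,c)\notin\mathfrak{D}$ rules out $a=b=0$) encodes $\partial_{\mathcal{A}}$ through $18$ scalar unknowns. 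A degree-$1$ derivation of the free algebra is freely determined by the images of the generators, so the only requirements are that $\partial_{\mathcal{A}}$ descend to $S_{a,b,c}$, i.e. $\partial_{\mathcal{A}}(f_i)=0$ in $S_{a,b,c}$ for $i=1,2,3$, and that $\partial_{\mathcal{A}}^2=0$.

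First I would impose descent. Expanding each $\partial_{\mathcal{A}}(f_i)$ by the signed Leibniz rule $\partial_{\mathcal{A}}(uv)=(\partial_{\mathcal{A}}u)v-u(\partial_{\mathcal{A}}v)$ for $|u|=|v|=1$ produces a degree-$3$ element of $S_{a,b,c}$; rewriting it in a fixed PBW basis of the $10$-dimensional space $(S_{a,b,c})_3$ and equating every coordinate to zero yields an over-determined homogeneous linear system in the $18$ unknowns whose coefficients are quadratic polynomials in $a,b,c$. To keep the bookkeeping manageable I would exploit the cyclic symmetry $x\mapsto y\mapsto z\mapsto x$, under which $f_1\mapsto f_2\mapsto f_3$: the blocks of equations coming from $f_2$ and $f_3$ are obtained from the $f_1$-block by the same relabeling of the unknowns, so only one block has to be computed by hand.

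The decisive step is to analyse how the solution space depends on $(a,b,c)$. I expect the system to admit only the trivial solution, hence $\partial_{\mathcal{A}}=0$, whenever $a^2\neq b^2$ or $c\neq 0$, and tracking the vanishing of the relevant minors should show that the rank drops exactly on the complementary locus $c=0,\ a^2=b^2$; this is part (1). On that locus the relations degenerate: when $a=-b,\ c=0$ they become $[y,z],[z,x],[x,y]$, so $S_{a,b,c}=k[x,y,z]$, and when $a=b,\ c=0$ they become the anticommutators, so $S_{a,b,c}=\mathcal{O}_{-1}(k^3)$. Solving the surviving equations in each degenerate algebra then pins down the stated shapes: in the commutative case every solution has the form $\partial_{\mathcal{A}}(v)=v\ell$ for a single linear form $\ell=\alpha x+\beta y+\gamma z$, which is part (2); in the anticommutative case the cross terms are killed and $\partial_{\mathcal{A}}(x),\partial_{\mathcal{A}}(y),\partial_{\mathcal{A}}(z)$ are forced to be combinations of $x^2,y^2,z^2$ alone, recorded by a matrix $M\in M_3(k)$, which is part (3).

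It remains to impose $\partial_{\mathcal{A}}^2=0$ on the surviving families, and I expect this to cost nothing. In case (1) it is vacuous. In case (2) commutativity gives $\partial_{\mathcal{A}}(\ell)=\ell^2$, so $\partial_{\mathcal{A}}^2(v)=(v\ell)\ell-v\ell^2=0$ for every generator and every $(\alpha,\beta,\gamma)$. In case (3) the squares $x^2,y^2,z^2$ are central in $\mathcal{O}_{-1}(k^3)$, whence $\partial_{\mathcal{A}}(x^2)=(\partial_{\mathcal{A}}x)x-x(\partial_{\mathcal{A}}x)=0$ and likewise for $y^2,z^2$, so $\partial_{\mathcal{A}}^2=0$ for every $M$. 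The main obstacle is the middle step: the system is over-determined with entries quadratic in $a,b,c$, and the delicate point is to show that extra solutions appear on precisely the locus $c=0,\ a^2=b^2$ and nowhere else, which is exactly where the symmetry reduction and a judicious choice of PBW normal form are essential.
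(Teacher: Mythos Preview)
Your overall strategy---encode $\partial_{\mathcal{A}}$ by the coefficients of $\partial_{\mathcal{A}}(x),\partial_{\mathcal{A}}(y),\partial_{\mathcal{A}}(z)$ in a basis of $(S_{a,b,c})_2$, impose the compatibility conditions, and watch how the solution space varies with $(a,b,c)$---is the same as the paper's. There is, however, a genuine gap in your plan for part~(1): you expect the descent conditions $\partial_{\mathcal{A}}(f_i)=0$ \emph{alone} to force $\partial_{\mathcal{A}}=0$ whenever $a^2\neq b^2$ or $c\neq 0$, deferring $\partial_{\mathcal{A}}^2=0$ to a final check on the degenerate families. This expectation is false. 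For example, when $c=0$ and $a^2\neq b^2$, the assignment
\[
\partial(x)=x^2,\qquad \partial(y)=\tfrac{b-a}{2b}\,xy,\qquad \partial(z)=\tfrac{b-a}{2b}\,zx
\]
extends to a nonzero degree-one derivation of $S_{a,b,0}$ (it satisfies all three relations $\partial(f_i)=0$ in the quotient), yet $\partial^2(y)=\tfrac{b^2-a^2}{4b^2}\,x^2y\neq 0$. In each of the paper's ``generic'' cases the descent equations leave a genuine $3$-parameter family of nonzero derivations, and it is precisely the constraint $\partial_{\mathcal{A}}^2=0$ that cuts this family down to zero; the paper therefore imposes $\partial_{\mathcal{A}}(f_i)=0$ and $\partial_{\mathcal{A}}^2=0$ simultaneously throughout.

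Your verifications that $\partial_{\mathcal{A}}^2=0$ is automatic on the families in parts~(2) and~(3) are correct (via $\partial(\ell)=\ell^2$ in the commutative case and centrality of $x^2,y^2,z^2$ in $\mathcal{O}_{-1}(k^3)$), and the cyclic symmetry $x\mapsto y\mapsto z\mapsto x$ is a legitimate bookkeeping device. But for part~(1) you must feed $\partial_{\mathcal{A}}^2=0$ into the linear system from the outset rather than treat it as an afterthought; otherwise the rank analysis you propose will not detect the locus $c=0,\ a^2=b^2$ as the only place where nontrivial differentials survive.
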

 \begin{proof}
 Note that
$S_{a,b,c}=\frac{k\langle x,y,z\rangle}{(f_1,f_2,f_3)},$
where $\begin{cases}
 f_1=ayz+bzy+cx^2\\
 f_2=azx+bxz+cy^2\\
 f_3=axy+byx+cz^2.
 \end{cases}$
Since $\partial_{\mathcal{A}}$  is a $k$-linear map of degree $1$, we may let
\begin{align*}
\begin{cases}
\partial_{\mathcal{A}}(x) = (x,y,z)M^x\left(
                         \begin{array}{c}
                           x\\
                           y \\
                           z
                         \end{array}
                       \right), \\
\partial_{\mathcal{A}}(y) = (x,y,z)M^y\left(
                         \begin{array}{c}
                      x \\
                      y\\
                      z
                         \end{array}
                       \right),\\
\partial_{\mathcal{A}}(z) = (x,y,z)M^z\left(
                         \begin{array}{c}
                           x\\
                           y \\
                           z
                         \end{array}
                       \right),
\end{cases}
\end{align*}

where \begin{align*}
& M^x=(c^x_1,c^x_2,c^x_3)=(m^x_{ij})_{3\times 3}=\left(
                         \begin{array}{c}
                           r_1^x\\
                           r_2^x \\
                           r_3^x
                         \end{array}
                       \right), \\
&M^y=(c^y_1,c^y_2,c^y_3)=(m^y_{ij})_{3\times 3}=\left(
                         \begin{array}{c}
                           r_1^y\\
                           r_2^y \\
                           r_3^y
                         \end{array}
                       \right) \\
\text{and}\quad
& M^z=(c^z_1,c^z_2,c^z_3)=(m^z_{ij})_{3\times 3}=\left(
                         \begin{array}{c}
                           r_1^z\\
                           r_2^z \\
                           r_3^z
                         \end{array}
                       \right) \\
 \end{align*}
  are three $3\times 3$ matrixes.
 In $\mathcal{A}^3$, we have the following system of equations
\begin{align*}
\begin{cases}\partial_{\mathcal{A}}(f_1)=0
 & \mathrm{Eq}(1)\\
\partial_{\mathcal{A}}(f_2)=0 & \mathrm{Eq}(2)\\
\partial_{\mathcal{A}}(f_3)=0 & \mathrm{Eq}(3)\\
\partial_{\mathcal{A}}^2(x)=0& \mathrm{Eq}(4)\\
\partial_{\mathcal{A}}^2(y)=0& \mathrm{Eq}(5)\\
 \partial_{\mathcal{A}}^2(z)=0 & \mathrm{Eq}(6)
\end{cases}.
\end{align*}
The equation $\mathrm{Eq}(1)$ is
\begin{align*}
0&=\partial_{\mathcal{A}}[(x,y,z)\left(
                                                                                        \begin{array}{ccc}
                                                                                          c & 0 & 0 \\
                                                                                          0 & 0 & a \\
                                                                                          0 & b & 0 \\
                                                                                        \end{array}
                                                                                      \right)
\left(
                         \begin{array}{c}
                           x \\
                           y \\
                           z
                         \end{array}
                       \right)]\\
=&(\partial_{\mathcal{A}}(x),\partial_{\mathcal{A}}(y),\partial_{\mathcal{A}}(z))\left(
                                                                                        \begin{array}{ccc}
                                                                                          c & 0 & 0 \\
                                                                                          0 & 0 & a \\
                                                                                          0 & b & 0 \\
                                                                                        \end{array}
                                                                                      \right)
\left(
                         \begin{array}{c}
                           x \\
                           y \\
                           z
                         \end{array}
                       \right)-(x,y,z)\left(
                                                                                        \begin{array}{ccc}
                                                                                          c & 0 & 0 \\
                                                                                          0 & 0 & a \\
                                                                                          0 & b & 0 \\
                                                                                        \end{array}
                                                                                      \right)\left(
                         \begin{array}{c}
                           \partial_{\mathcal{A}}(x) \\
                           \partial_{\mathcal{A}}(y) \\
                           \partial_{\mathcal{A}}(z)
                         \end{array}
                       \right)\\
=&(x,y,z) [M^x\left(
                         \begin{array}{c}
                           x \\
                           y \\
                           z
                         \end{array}
                       \right),M^y\left(
                         \begin{array}{c}
                           x \\
                           y  \\
                           z
                         \end{array}
                       \right),M^z\left(
                         \begin{array}{c}
                           x \\
                           y \\
                           z
                         \end{array}
                       \right)]\left(
                                                                                        \begin{array}{ccc}
                                                                                          c & 0 & 0 \\
                                                                                          0 & 0 & a \\
                                                                                          0 & b & 0 \\
                                                                                        \end{array}
                                                                                      \right)\left(
                         \begin{array}{c}
                           x\\
                           y \\
                           z
                         \end{array}
                       \right)\\
& - (x,y,z)\left(
                                                                                        \begin{array}{ccc}
                                                                                          c & 0 & 0 \\
                                                                                          0 & 0 & a \\
                                                                                          0 & b & 0 \\
                                                                                        \end{array}
                                                                                      \right)\left(
                         \begin{array}{c}
                           (x,y,z)M^x \\
                           (x,y,z)M^y  \\
                           (x,y,z)M^z
                         \end{array}
                       \right)\left(
                         \begin{array}{c}
                           x \\
                           y  \\
                           z
                         \end{array}
                       \right)\\
=&(x,y,z) (xc^x_1+yc^x_2+zc^x_3,xc^y_1+yc^y_2+zc^y_3,xc^z_1+yc^z_2+zc^z_3)\left(
                                                                                        \begin{array}{ccc}
                                                                                          c & 0 & 0 \\
                                                                                          0 & 0 & a \\
                                                                                          0 & b & 0 \\
                                                                                        \end{array}
                                                                                      \right)\left(
                         \begin{array}{c}
                           x\\
                           z \\
                           y
                         \end{array}
                       \right)\\
& - (x,y,z)\left(
                                                                                        \begin{array}{ccc}
                                                                                          c & 0 & 0 \\
                                                                                          0 & 0 & a \\
                                                                                          0 & b & 0 \\
                                                                                        \end{array}
                                                                                      \right)\left(
                         \begin{array}{c}
                           xr_1^x+yr_2^x+zr_3^x \\
                           xr_1^y+yr_2^y+zr_3^y  \\
                           xr_1^z+yr_2^z+zr_3^z
                         \end{array}
                       \right)\left(
                         \begin{array}{c}
                           x \\
                           y  \\
                           z
                         \end{array}
                       \right).
     \end{align*}
Similarly, $\mathrm{Eq}(2)$ and $\mathrm{Eq}(3)$ are
\begin{align*}
0=&(x,y,z) (xc^x_1+yc^x_2+zc^x_3,xc^y_1+yc^y_2+zc^y_3,xc^z_1+yc^z_2+zc^z_3)\left(
                                                                                        \begin{array}{ccc}
                                                                                          0 & 0 & b \\
                                                                                          0 & c & 0 \\
                                                                                          a & 0 & 0 \\
                                                                                        \end{array}
                                                                                      \right)\left(
                         \begin{array}{c}
                           x\\
                           z \\
                           y
                         \end{array}
                       \right)\\
& - (x,y,z)\left(
                                                                                        \begin{array}{ccc}
                                                                                          0 & 0 & b \\
                                                                                          0 & c & 0 \\
                                                                                          a & 0 & 0 \\
                                                                                        \end{array}
                                                                                      \right)\left(
                         \begin{array}{c}
                           xr_1^x+yr_2^x+zr_3^x \\
                           xr_1^y+yr_2^y+zr_3^y  \\
                           xr_1^z+yr_2^z+zr_3^z
                         \end{array}
                       \right)\left(
                         \begin{array}{c}
                           x \\
                           y  \\
                           z
                         \end{array}
                       \right)
\end{align*}
and
\begin{align*}
0=&(x,y,z) (xc^x_1+yc^x_2+zc^x_3,xc^y_1+yc^y_2+zc^y_3,xc^z_1+yc^z_2+zc^z_3)\left(
                                                                                        \begin{array}{ccc}
                                                                                          0 & a & 0 \\
                                                                                          b & 0 & 0 \\
                                                                                          0 & 0 & c \\
                                                                                        \end{array}
                                                                                      \right)\left(
                         \begin{array}{c}
                           x\\
                           z \\
                           y
                         \end{array}
                       \right)\\
& - (x,y,z)\left(
                                                                                        \begin{array}{ccc}
                                                                                          0 & a & 0 \\
                                                                                          b & 0 & 0 \\
                                                                                          0 & 0 & c \\
                                                                                        \end{array}
                                                                                      \right)\left(
                         \begin{array}{c}
                           xr_1^x+yr_2^x+zr_3^x \\
                           xr_1^y+yr_2^y+zr_3^y  \\
                           xr_1^z+yr_2^z+zr_3^z
                         \end{array}
                       \right)\left(
                         \begin{array}{c}
                           x \\
                           y  \\
                           z
                         \end{array}
                       \right),
\end{align*}
respectively.
For $\mathrm{Eq}(4)$,$\mathrm{Eq}(5)$ and $\mathrm{Eq}(6)$, we can also expand them similarly. For example, in $\mathcal{A}^3$, $\mathrm{Eq}(4)$ is
\begin{align*}
0&=\partial_{\mathcal{A}}\circ\partial_{\mathcal{A}}(x)=\partial_{\mathcal{A}}[(x,y,z)M^x\left(
                         \begin{array}{c}
                           x \\
                           y \\
                           z
                         \end{array}
                       \right)]\\
                    & =(\partial_{\mathcal{A}}(x),\partial_{\mathcal{A}}(y),\partial_{\mathcal{A}}(z)) M^x\left(
                         \begin{array}{c}
                           x \\
                           y \\
                           z
                         \end{array}
                       \right)  - (x,y,z)M^x \left(
                         \begin{array}{c}
                           \partial_{\mathcal{A}}(x) \\
                           \partial_{\mathcal{A}}(y)  \\
                           \partial_{\mathcal{A}}(z)
                         \end{array}
                       \right)\\
                   &   =(x,y,z) [M^x\left(
                         \begin{array}{c}
                           x \\
                           y \\
                           z
                         \end{array}
                       \right),M^y\left(
                         \begin{array}{c}
                           x \\
                           y  \\
                           z
                         \end{array}
                       \right),M^z\left(
                         \begin{array}{c}
                           x \\
                           y \\
                           z
                         \end{array}
                       \right)]M^x\left(
                         \begin{array}{c}
                           x\\
                           y \\
                           z
                         \end{array}
                       \right)\\
& - (x,y,z)M^x\left(
                         \begin{array}{c}
                           (x,y,z)M^x \\
                           (x,y,z)M^y  \\
                           (x,y,z)M^z
                         \end{array}
                       \right)\left(
                         \begin{array}{c}
                           x \\
                           y  \\
                           z
                         \end{array}
                       \right)\\
=&(x,y,z)[(xc_1^x+yc_2^x+zc_3^x)r^x_1+ (xc_1^y+yc_2^y+zc_3^y)r^x_2+(xc_1^z+yc_2^z+zc_3^z)r^x_3]\left(
                         \begin{array}{c}
                           x \\
                           y \\
                           z
                         \end{array}
                       \right)\\
-&(x,y,z)[c^x_1(xr_1^x+yr_2^x+zr_3^x)+c^x_2( xr_1^y+yr_2^y+zr_3^y )+c_3^x(xr_1^z+yr_2^z+zr_3^z)]
                        \left(
                         \begin{array}{c}
                           x\\
                           y \\
                           z
                         \end{array}
                       \right)\\
=&(x,y,z)[(yc_2^x+zc_3^x)r^x_1+ (xc_1^y+yc_2^y+zc_3^y)r^x_2+(xc_1^z+yc_2^z+zc_3^z)r^x_3]\left(
                         \begin{array}{c}
                           x \\
                           y \\
                           z
                         \end{array}
                       \right)\\
-&(x,y,z)[c^x_1(yr_2^x+zr_3^x)+c^x_2( xr_1^y+yr_2^y+zr_3^y )+c_3^x(xr_1^z+yr_2^z+zr_3^z)]
                        \left(
                         \begin{array}{c}
                           x\\
                           y \\
                           z
                         \end{array}
                       \right).
\end{align*}
By similar computations, $\mathrm{Eq}(5)$ and $\mathrm{Eq}(6)$ are
\begin{align*}
0&=(x,y,z)[(xc_1^x+yc_2^x+zc_3^x)r_1^y+(xc_1^y+zc_3^y)r_2^y+(xc_1^z+yc_2^z+zc_3^z)r_3^y] \left(
                         \begin{array}{c}
                           x\\
                           y \\
                           z
                         \end{array}
                       \right) \\
 &-(x,y,z)[c_1^y(xr_1^x+yr_2^x+zr_3^x)+c_2^y(xr_1^y+zr_3^y)+c_3^y(xr_1^z+yr_2^z+zr_3^z)]\left(
                         \begin{array}{c}
                           x\\
                           y \\
                           z
                         \end{array}
                       \right)
\end{align*}
and
\begin{align*}
  0&=(x,y,z)[(xc_1^x+yc_2^x+zc_3^x)r_1^z+(xc_1^y+yc_2^y+zc_3^y)r_2^z+(xc_1^z+yc_2^z)r_3^z] \left(
                         \begin{array}{c}
                           x\\
                           y \\
                           z
                         \end{array}
                       \right) \\
 &-(x,y,z)[c_1^z(xr_1^x+yr_2^x+zr_3^x)+c_2^z(xr_1^y+yr_2^y+zr_3^y)+c_3^z(xr_1^z+yr_2^z)]\left(
                         \begin{array}{c}
                           x\\
                           y \\
                           z
                         \end{array}
                       \right),
\end{align*}
respectively. In order to study the solutions of $\mathrm{Eq}(1)\sim \mathrm{Eq}(6)$, we divide all
$3$-dimensional DG Sklyanin algebras into the following $4$ case:
\begin{align*}& \mathrm{Case}\,1.\quad a=0,b\neq 0,c\neq 0; \\
&\mathrm{Case}\,2.\quad  b=0,a\neq 0,c\neq 0; \\
&\mathrm{Case}\,3. \quad a\neq 0,b\neq 0,c\neq 0;\\
& \mathrm{Case}\,4. \quad  c=0,a\neq 0,b\neq 0.
\end{align*}

 In Case $1$, we have $b=0,a\in k^{\times} ,c\in k^{\times}$. One sees that $S_{a,b,c}^3$ has a basis
$$\{x^3,x^2y,x^2z,xyx,xzx,xyz,yx^2,yxy,yzx,zxy \}.$$  Via some routine and tedious computations of $\mathrm{Eq}(1), \mathrm{Eq}(2)$ and $\mathrm{Eq}(3)$, we can see that they are equivalent to
\begin{align}\label{f1}
\begin{cases}
m_{23}^x=0\\
m_{31}^y=0\\
m_{12}^z=0\\
m^x_{12}=m^z_{23}\\
m^x_{31}=m^y_{23}\\
m_{12}^y=m^z_{31}\\
m_{22}^x=\frac{c}{b}(m_{13}^x-m_{31}^x)\\
m_{33}^x=\frac{c}{b}(m_{21}^x-m_{12}^x)\\
m_{11}^y=\frac{c}{b}(m_{32}^y-m_{23}^y)\\
m_{33}^y=\frac{c}{b}(m_{21}^y-m_{12}^y)\\
m_{11}^z=\frac{c}{b}(m_{32}^z-m_{23}^z)\\
m_{22}^z=\frac{c}{b}(m_{13}^z-m_{31}^z)\\
m_{11}^x=2m_{31}^z+\frac{c}{b}m_{32}^x \\
m_{22}^y=2m_{12}^x+\frac{c}{b}m_{13}^y\\
m_{33}^z=2m_{23}^y+\frac{c}{b}m_{21}^z.
\end{cases}
\end{align}
Substituting (\ref{f1}) into the $30$ equations obtained by $\mathrm{Eq}(4)$, $\mathrm{Eq}(5)$ and $\mathrm{Eq}(6)$, we see that those equations are equivalent to $m_{12}^x=m_{31}^x=m_{31}^z=0.$
Therefore, the equations $\mathrm{Eq}(1)\sim \mathrm{Eq}(6)$ are equivalent to
\begin{align*}
\begin{cases}
m^x_{12}=m_{23}^x=m^x_{31}=m^y_{12}=m^y_{23}=m_{31}^y=m_{12}^z=m^z_{23}=m_{31}^z=0\\
m_{11}^x=\frac{c}{b}m_{32}^x \\
m_{22}^x=\frac{c}{b}m_{13}^x\\
m_{33}^x=\frac{c}{b}m_{21}^x\\
m_{11}^y=\frac{c}{b}m_{32}^y\\
m_{22}^y=\frac{c}{b}m_{13}^y\\
m_{33}^y=\frac{c}{b}m_{21}^y\\
m_{11}^z=\frac{c}{b}m_{32}^z\\
m_{22}^z=\frac{c}{b}m_{13}^z\\
m_{33}^z=\frac{c}{b}m_{21}^z.
\end{cases}
\end{align*}
Then  $\partial_{\mathcal{A}}$ is defined  by
$$\begin{cases}
\partial_{\mathcal{A}}(x) = (x,y,z)\left(
                                      \begin{array}{ccc}
                                        \alpha_1 & 0 & \frac{b}{c}\alpha_2 \\
                                        \frac{b}{c}\alpha_3 & \alpha_2 & 0 \\
                                        0 & \frac{b}{c}\alpha_1 & \alpha_3 \\
                                      \end{array}
                                    \right)
\left(
                         \begin{array}{c}
                           x\\
                           y \\
                           z
                         \end{array}
                       \right)\\
\partial_{\mathcal{A}}(y) = (x,y,z)\left(
                                      \begin{array}{ccc}
                                        \beta_1 & 0 & \frac{b}{c}\beta_2 \\
                                        \frac{b}{c}\beta_3 & \beta_2 & 0 \\
                                        0 & \frac{b}{c}\beta_1 & \beta_3 \\
                                      \end{array}
                                    \right)\left(
                         \begin{array}{c}
                      x \\
                      y\\
                      z
                         \end{array}
                       \right) \\
\partial_{\mathcal{A}}(z) = (x,y,z)\left(
                                      \begin{array}{ccc}
                                        \gamma_1 & 0 & \frac{b}{c}\gamma_2 \\
                                        \frac{b}{c}\gamma_3 & \gamma_2 & 0 \\
                                        0 & \frac{b}{c}\gamma_1 & \gamma_3 \\
                                      \end{array}
                                    \right)\left(
                         \begin{array}{c}
                           x\\
                           y \\
                           z
                         \end{array}
                       \right),
\end{cases}$$
for some $(\alpha_1,\alpha_2,\alpha_3,\beta_1,\beta_2,\beta_3, \gamma_1, \gamma_2,\gamma_3)\in \Bbb{A}_k^9 $.
 Since
 \begin{align*}
 \begin{cases}
 bzy+cx^2=0\\
  bxz+cy^2=0\\
  byx+cz^2=0
  \end{cases}
 \end{align*}
  in $\mathcal{A}^2$, we have \begin{align*}
                     \partial_{\mathcal{A}}(x)&=(x,y,z)\left(
                                      \begin{array}{ccc}
                                        \alpha_1 & 0 & \frac{b}{c}\alpha_2 \\
                                        \frac{b}{c}\alpha_3 & \alpha_2 & 0 \\
                                        0 & \frac{b}{c}\alpha_1 & \alpha_3 \\
                                      \end{array}
                                    \right)
\left(
                         \begin{array}{c}
                           x\\
                           y \\
                           z
                         \end{array}
                       \right)\\
                       &=\alpha_1x^2+\frac{b}{c}\alpha_2xz+\frac{b}{c}\alpha_3yx+\alpha_2y^2+\frac{b}{c}\alpha_1zy+\alpha_3z^2\\
                       &=\frac{\alpha_1}{c}(cx^2+bzy) + \frac{\alpha_2}{c}(cy^2+bxz) +\frac{\alpha_3}{c}(cz^2+byx) \\
                       &=0.
\end{align*}
Similarly, we can show that $\partial_{\mathcal{A}}(y)=\partial_{\mathcal{A}}(z)=0$. Hence $\partial_{\mathcal{A}}=0$.

In Case $2$, we have $b=0, a,c\in k^{\times}$. One sees that $S_{a,b,c}^3$ admits a $k$-linear basis
$$\{x^3,x^2y,x^2z,xy^2,xyx,xzy,yx^2,yxz,y^2x,zyx \}.$$ By computations of $\mathrm{Eq}(1), \mathrm{Eq}(2)$ and $\mathrm{Eq}(3)$, we can see that they are equivalent to
\begin{align}\label{f2}
\begin{cases}
m_{32}^x=0\\
m_{13}^y=0\\
m_{21}^z=0\\
m^x_{13}=m^y_{32}\\
m^x_{21}=m^z_{32}\\
m_{21}^y=m^z_{13}\\
m_{22}^x=\frac{c}{a}(m_{31}^x-m_{13}^x)\\
m_{33}^x=\frac{c}{a}(m_{12}^x-m_{21}^x)\\
m_{11}^y=\frac{c}{a}(m_{23}^y-m_{32}^y)\\
m_{33}^y=\frac{c}{a}(m_{12}^y-m_{21}^y)\\
m_{11}^z=\frac{c}{a}(m_{23}^z-m_{32}^z)\\
m_{22}^z=\frac{c}{a}(m_{31}^z-m_{13}^z)\\
m_{11}^x=2m_{21}^y+\frac{c}{a}m_{23}^x \\
m_{22}^y=2m_{21}^x+\frac{c}{a}m_{31}^y\\
m_{33}^z=2m_{13}^x+\frac{c}{a}m_{12}^z.
\end{cases}
\end{align}
Substituting (\ref{f2}) into the $30$ equations obtained by $\mathrm{Eq}(4)$, $\mathrm{Eq}(5)$ and $\mathrm{Eq}(6)$, we see that those equations are equivalent to
$m_{13}^x=m_{21}^x=m_{13}^z=0.$
Therefore, the equations $\mathrm{Eq}(1)\sim \mathrm{Eq}(6)$ are equivalent to
\begin{align*}
\begin{cases}
m^x_{13}=m_{21}^x=m^x_{32}=m^y_{13}=m^y_{21}=m_{32}^y=m_{13}^z=m^z_{21}=m_{32}^z=0\\
m_{11}^x=\frac{c}{a}m_{23}^x \\
m_{22}^x=\frac{c}{a}m_{31}^x\\
m_{33}^x=\frac{c}{a}m_{12}^x\\
m_{11}^y=\frac{c}{a}m_{23}^y\\
m_{22}^y=\frac{c}{a}m_{31}^y\\
m_{33}^y=\frac{c}{a}m_{12}^y\\
m_{11}^z=\frac{c}{a}m_{23}^z\\
m_{22}^z=\frac{c}{a}m_{31}^z\\
m_{33}^z=\frac{c}{a}m_{12}^z.
\end{cases}
\end{align*}
Then  $\partial_{\mathcal{A}}$ is defined  by
$$\begin{cases}
\partial_{\mathcal{A}}(x) = (x,y,z)\left(
                                      \begin{array}{ccc}
                                        \alpha_1 & \frac{a}{c}\alpha_3 &0\\
                                        0 & \alpha_2 & \frac{a}{c}\alpha_1 \\
                                         \frac{a}{c}\alpha_2  & 0 & \alpha_3 \\
                                      \end{array}
                                    \right)
\left(
                         \begin{array}{c}
                           x\\
                           y \\
                           z
                         \end{array}
                       \right) \\
\partial_{\mathcal{A}}(y) = (x,y,z)\left(
                                      \begin{array}{ccc}
                                        \beta_1 & \frac{a}{c}\beta_3 &0\\
                                        0 & \beta_2 & \frac{a}{c}\beta_1 \\
                                         \frac{a}{c}\beta_2  & 0 & \beta_3 \\
                                      \end{array}
                                    \right)\left(
                         \begin{array}{c}
                      x \\
                      y\\
                      z
                         \end{array}
                       \right) \\
\partial_{\mathcal{A}}(z) = (x,y,z)\left(
                                      \begin{array}{ccc}
                                        \gamma_1 & \frac{a}{c}\gamma_3 &0\\
                                        0 & \gamma_2 & \frac{a}{c}\gamma_1 \\
                                         \frac{a}{c}\gamma_2  & 0 & \gamma_3 \\
                                      \end{array}
                                    \right)\left(
                         \begin{array}{c}
                           x\\
                           y \\
                           z
                         \end{array}
                       \right),
                       \end{cases}$$
for some $(\alpha_1,\alpha_2,\alpha_3,\beta_1,\beta_2,\beta_3, \gamma_1, \gamma_2,\gamma_3)\in \Bbb{A}_k^9 $. Since
$$\begin{cases}
ayz+cx^2=0\\
azx+cy^2=0\\
axy+cz^2=0
\end{cases}$$
in $\mathcal{A}^2$,  we have \begin{align*}
                     \partial_{\mathcal{A}}(x)&=(x,y,z)\left(
                                      \begin{array}{ccc}
                                        \alpha_1 & \frac{a}{c}\alpha_3 &0\\
                                        0 & \alpha_2 & \frac{a}{c}\alpha_1 \\
                                         \frac{a}{c}\alpha_2  & 0 & \alpha_3 \\
                                      \end{array}
                                    \right)
\left(
                         \begin{array}{c}
                           x\\
                           y \\
                           z
                         \end{array}
                       \right)\\
                       &=\alpha_1x^2+\frac{a}{c}\alpha_3xy+\frac{a}{c}\alpha_1yz+\alpha_2y^2+\frac{a}{c}\alpha_2zx+\alpha_3z^2\\
                       &=\frac{\alpha_1}{c}(cx^2+ayz)+\frac{\alpha_2}{c}(cy^2+azx)+\frac{\alpha_3}{c}(cz^2+axy)\\
                       &=0.
\end{align*}
Similarly, we can show that $\partial_{\mathcal{A}}(y)=\partial_{\mathcal{A}}(z)=0$. Hence $\partial_{\mathcal{A}}=0$.

In Case $3$, we have $ a,b, c\in k^{\times}$.  One sees that $\mathcal{A}^3=S_{a,b,c}^3$ has a $k$-linear basis
$$\{x^2y,x^2z,xy^2,xz^2,yx^2,y^2x,y^2z,yz^2,xyz,xzy,yzx,yxz\}.$$
By computations, one sees that $\mathrm{Eq}(1), \mathrm{Eq}(2)$ and $\mathrm{Eq}(3)$ are equivalent to
\begin{align*}
\begin{cases}
(c-\frac{b^2c}{a^2})m^z_{11}-(a+\frac{b^2}{a})m^y_{13}+\frac{c^2}{b}m^z_{23}+(b+\frac{b^3}{a^2})m^y_{31}-\frac{c^2}{b}m^z_{32}=0\\
-\frac{bc^2}{a^2}m^z_{11}+am^x_{12}-\frac{bc}{a}m^y_{13}-am^y_{22}+(c+\frac{b^2c}{a^2})m^y_{31}+am^z_{32}-\frac{a^2}{c}m^x_{33}=0\\
\frac{c^2}{a}m^z_{11}+\frac{b^2}{a}m^x_{12}-\frac{bc}{a}m^y_{13}-am^y_{22}+(c+\frac{b^2c}{a^2})m^y_{31}+am^z_{32}-\frac{a^2}{c}m^x_{33}=0\\
\frac{bc}{a}m^x_{12}-cm^x_{21}+(c+\frac{ac}{b})m^z_{23}-(\frac{ac}{b}+\frac{bc}{a})m^z_{32}=0\\
\frac{b^2-ab}{c}m^z_{11}+bm^x_{12}+cm^y_{13}-am^x_{21}-\frac{bc}{a}m^y_{31}+(a-b)m^z_{32}=0\\
\frac{ab-a^2}{c}m^z_{11}+bm^x_{12}-\frac{a^2}{b}m^x_{21}-bm^z_{23}+\frac{a^2}{b}m^z_{32}+\frac{a^2-ab}{c}m^x_{33}=0\\
-am^x_{12}+\frac{a^2}{b}m^x_{21}+am^z_{23}-\frac{a^2}{b}m^z_{32}=0\\
-bm^x_{12}-cm^y_{13}+bm^x_{21}+\frac{bc}{a}m^y_{31}+\frac{ab-b^2}{c}m^x_{33}=0\\
bm^z_{11}-\frac{ac+bc}{b}m^x_{12}+\frac{ac}{b}m^x_{21}+cm^y_{22}-\frac{a^2}{c}m^y_{31}-cm^z_{32}+\frac{a^2}{b}m^x_{33}=0\\
-\frac{c^2}{b}m^x_{12}+\frac{a^2+b^2}{a}m^y_{13}+\frac{c^2}{b}m^x_{21}-(a+b)m^y_{31}+\frac{ac-bc}{b}m^x_{33}=0\\
-am^z_{11}+cm^x_{21}-\frac{ac}{b}m^y_{22}-cm^z_{23}+\frac{b^2}{c}m^y_{31}+\frac{2ac}{b}m^z_{32}-bm^x_{33}=0\\
\frac{bc}{a}m^y_{13}+\frac{ab-a^2}{b}m^y_{22}-2am^z_{23}-cm^y_{31}+\frac{2a^2}{b}m^z_{32}=0\\
-\frac{b^2c}{a^2}m^y_{11}+bm^z_{12}-\frac{c^2}{a}m^x_{13}-cm^x_{22}+\frac{bc^2+ac^2}{a^2}m^x_{31}+\frac{ab}{c}m^y_{32}-\frac{ab}{c}m^z_{33}=0\\
\frac{a^3-b^3}{a^2}m^y_{11}-\frac{ac+bc}{a}m^x_{13}+cm^y_{23}+\frac{b^2c+abc}{a^2}m^x_{31}-\frac{ac}{b}m^y_{32}=0\\
\frac{bc}{a}m^y_{11}+bm^z_{12}-(a+\frac{b^2}{a})m^z_{21}+cm^x_{22}-\frac{c^2}{a}m^x_{31}-\frac{b^3}{ac}m^y_{32}+\frac{b^3}{ac}m^z_{33}=0\\
cm^z_{12}-\frac{bc}{a}m^z_{21}+2am^y_{23}-(\frac{b^2}{a}+\frac{a^2}{b})m^y_{32}+(\frac{b^2}{a}-a)m^z_{33}=0\\
cm^y_{11}-(b+\frac{a^2}{b})m^z_{12}+am^z_{21}+\frac{ac}{b}m^x_{22}-\frac{a^3}{bc}m^x_{31}-\frac{c^2}{b}m^y_{32}+\frac{a^3}{bc}m^z_{33}=0\\
-\frac{ac}{b}m^z_{12}+2bm^x_{13}+cm^z_{21}-bm^z_{23}-(\frac{b^2}{a}+\frac{a^2}{b})m^x_{31}+(\frac{a^2}{b}-b)m^z_{33}=0\\
-cm^y_{11}+am^z_{21}-\frac{a^2c}{b^2}m^x_{22}-\frac{c^2}{b}m^y_{23}+\frac{ab}{c}m^x_{31}+(\frac{c^2}{b}+\frac{ac^2}{b^2})m^y_{32}-\frac{ab}{c}m^z_{33}=0\\
cm^x_{13}+(b-\frac{a^3}{b^2})m^x_{22}-(c+\frac{ac}{b})m^y_{23}-\frac{bc}{a}m^x_{31}+(\frac{ac}{b}+\frac{a^2c}{b^2}m^y_{32}=0\\
\frac{ab-b^2}{c}m^y_{11}+\frac{bc}{a}m^z_{12}+am^x_{13}-cm^z_{21}-bm^x_{31}+(b-a)m^y_{32}=0\\
\frac{a^2-ab}{c}m^y_{11}+cm^z_{12}-\frac{ac}{b}m^x_{22}-am^x_{31}+am^y_{32}=0\\
-cm^z_{12}-\frac{ac}{b}m^z_{21}-am^y_{23}+am^y_{32}=0\\
-\frac{bc}{a}m^z_{12}-bm^x_{13}+cm^z_{21}+\frac{b^2-ab}{c}m^x_{22}+bm^x_{31}=0\\
\frac{b^2}{a}m^y_{12}+bm^z_{13}-bm^y_{21}-\frac{b^2}{a}m^z_{31}=0\\
am^y_{12}-am^y_{21}-cm^x_{23}+\frac{ac}{b}m^x_{32}+\frac{ab-a^2}{c}m^y_{33}=0\\
-bm^y_{12}+am^y_{21}+\frac{a^2-ab}{c})m^z_{22}+cm^x_{23}+(b-a)m^z_{31}-\frac{ac}{b}m^x_{32}=0\\
-\frac{b^2}{a}m^y_{12}-am^z_{13}+am^y_{21}+\frac{ab-b^2}{c}m^z_{22}+\frac{b^2}{a}m^z_{31}+\frac{b^2-ab}{c}m^y_{33}=0\\
-\frac{bc}{a}m^x_{11}+cm^y_{12}-cm^z_{13}-bm^z_{22}+\frac{2bc}{a}m^z_{31}+\frac{a^2}{c}m^x_{32}-am^y_{33}=0\\
(b-\frac{b^2}{a})m^x_{11}-2bm^z_{13}+\frac{ac}{b}m^x_{23}+\frac{2b^2}{a}m^z_{31}-cm^x_{32}=0\\
cm^x_{11}+\frac{bc}{a}m^y_{12}-(c+\frac{bc}{a})m^y_{21}+am^z_{22}-cm^z_{31}-\frac{b^2}{c}m^x_{32}+\frac{b^2}{a}m^y_{33}=0\\
\frac{c^2}{a}m^y_{12}-\frac{c^2}{a}m^y_{21}+(b+\frac{a^2}{b})m^x_{23}-(a+b)m^x_{32}+(\frac{bc}{a}-c)m^y_{33}=0\\
am^x_{11}-2am^y_{12}+\frac{a^2}{b}m^y_{21}+\frac{c^2}{b})m^z_{22}-\frac{a^2}{b}m^z_{31}-\frac{ac}{b}m^x_{32}+\frac{a^2}{c}m^y_{33}=0\\
-cm^y_{12}+(c+\frac{bc}{a})m^z_{13}+\frac{ac}{b}m^y_{21}-(\frac{bc}{a}+\frac{ac}{b})m^z_{31}=0\\
-bm^x_{11}+bm^y_{21}-\frac{ac^2}{b^2}m^z_{22}-\frac{ac}{b}m^x_{23}+bm^z_{31}+(c+\frac{a^2c}{b^2})m^x_{32}-\frac{b^2}{c}m^y_{33}=0\\
\frac{c^2}{a}m^z_{13}+(c-\frac{a^2c}{b^2})m^z_{22}-(b+\frac{a^2}{b})m^x_{23}-\frac{c^2}{a}m^z_{31}+(a+\frac{a^3}{b^2})m^x_{32}=0.\\
\end{cases}
\end{align*}
Note that
the equations above can be divided into the following three systems of equations:
\begin{small}
\begin{align}\label{typeone}
\begin{cases}
(c-\frac{b^2c}{a^2})m^z_{11}-(a+\frac{b^2}{a})m^y_{13}+\frac{c^2}{b}m^z_{23}+(b+\frac{b^3}{a^2})m^y_{31}-\frac{c^2}{b}m^z_{32}=0\\
-\frac{bc^2}{a^2}m^z_{11}+am^x_{12}-\frac{bc}{a}m^y_{13}-am^y_{22}+(c+\frac{b^2c}{a^2})m^y_{31}+am^z_{32}-\frac{a^2}{c}m^x_{33}=0\\
\frac{c^2}{a}m^z_{11}+\frac{b^2}{a}m^x_{12}-\frac{bc}{a}m^y_{13}-am^y_{22}+(c+\frac{b^2c}{a^2})m^y_{31}+am^z_{32}-\frac{a^2}{c}m^x_{33}=0\\
\frac{bc}{a}m^x_{12}-cm^x_{21}+(c+\frac{ac}{b})m^z_{23}-(\frac{ac}{b}+\frac{bc}{a})m^z_{32}=0\\
\frac{b^2-ab}{c}m^z_{11}+bm^x_{12}+cm^y_{13}-am^x_{21}-\frac{bc}{a}m^y_{31}+(a-b)m^z_{32}=0\\
\frac{ab-a^2}{c}m^z_{11}+bm^x_{12}-\frac{a^2}{b}m^x_{21}-bm^z_{23}+\frac{a^2}{b}m^z_{32}+\frac{a^2-ab}{c}m^x_{33}=0\\
-am^x_{12}+\frac{a^2}{b}m^x_{21}+am^z_{23}-\frac{a^2}{b}m^z_{32}=0\\
-bm^x_{12}-cm^y_{13}+bm^x_{21}+\frac{bc}{a}m^y_{31}+\frac{ab-b^2}{c}m^x_{33}=0\\
bm^z_{11}-\frac{ac+bc}{b}m^x_{12}+\frac{ac}{b}m^x_{21}+cm^y_{22}-\frac{a^2}{c}m^y_{31}-cm^z_{32}+\frac{a^2}{b}m^x_{33}=0\\
-\frac{c^2}{b}m^x_{12}+\frac{a^2+b^2}{a}m^y_{13}+\frac{c^2}{b}m^x_{21}-(a+b)m^y_{31}+\frac{ac-bc}{b}m^x_{33}=0\\
-am^z_{11}+cm^x_{21}-\frac{ac}{b}m^y_{22}-cm^z_{23}+\frac{b^2}{c}m^y_{31}+\frac{2ac}{b}m^z_{32}-bm^x_{33}=0\\
\frac{bc}{a}m^y_{13}+\frac{ab-a^2}{b}m^y_{22}-2am^z_{23}-cm^y_{31}+\frac{2a^2}{b}m^z_{32}=0,
\end{cases}
\end{align}
\end{small}
\begin{small}
\begin{align}\label{typetwo}
\begin{cases}
-\frac{b^2c}{a^2}m^y_{11}+bm^z_{12}-\frac{c^2}{a}m^x_{13}-cm^x_{22}+\frac{bc^2+ac^2}{a^2}m^x_{31}+\frac{ab}{c}m^y_{32}-\frac{ab}{c}m^z_{33}=0\\
\frac{a^3-b^3}{a^2}m^y_{11}-\frac{ac+bc}{a}m^x_{13}+cm^y_{23}+\frac{b^2c+abc}{a^2}m^x_{31}-\frac{ac}{b}m^y_{32}=0\\
\frac{bc}{a}m^y_{11}+bm^z_{12}-(a+\frac{b^2}{a})m^z_{21}+cm^x_{22}-\frac{c^2}{a}m^x_{31}-\frac{b^3}{ac}m^y_{32}+\frac{b^3}{ac}m^z_{33}=0\\
cm^z_{12}-\frac{bc}{a}m^z_{21}+2am^y_{23}-(\frac{b^2}{a}+\frac{a^2}{b})m^y_{32}+(\frac{b^2}{a}-a)m^z_{33}=0\\
cm^y_{11}-(b+\frac{a^2}{b})m^z_{12}+am^z_{21}+\frac{ac}{b}m^x_{22}-\frac{a^3}{bc}m^x_{31}-\frac{c^2}{b}m^y_{32}+\frac{a^3}{bc}m^z_{33}=0\\
-\frac{ac}{b}m^z_{12}+2bm^x_{13}+cm^z_{21}-bm^z_{23}-(\frac{b^2}{a}+\frac{a^2}{b})m^x_{31}+(\frac{a^2}{b}-b)m^z_{33}=0\\
-cm^y_{11}+am^z_{21}-\frac{a^2c}{b^2}m^x_{22}-\frac{c^2}{b}m^y_{23}+\frac{ab}{c}m^x_{31}+(\frac{c^2}{b}+\frac{ac^2}{b^2})m^y_{32}-\frac{ab}{c}m^z_{33}=0\\
cm^x_{13}+(b-\frac{a^3}{b^2})m^x_{22}-(c+\frac{ac}{b})m^y_{23}-\frac{bc}{a}m^x_{31}+(\frac{ac}{b}+\frac{a^2c}{b^2}m^y_{32}=0\\
\frac{ab-b^2}{c}m^y_{11}+\frac{bc}{a}m^z_{12}+am^x_{13}-cm^z_{21}-bm^x_{31}+(b-a)m^y_{32}=0\\
\frac{a^2-ab}{c}m^y_{11}+cm^z_{12}-\frac{ac}{b}m^x_{22}-am^x_{31}+am^y_{32}=0\\
-cm^z_{12}-\frac{ac}{b}m^z_{21}-am^y_{23}+am^y_{32}=0\\
-\frac{bc}{a}m^z_{12}-bm^x_{13}+cm^z_{21}+\frac{b^2-ab}{c}m^x_{22}+bm^x_{31}=0,
\end{cases}
\end{align}
\end{small}
\begin{small}
\begin{align}\label{typethree}
\begin{cases}
\frac{b^2}{a}m^y_{12}+bm^z_{13}-bm^y_{21}-\frac{b^2}{a}m^z_{31}=0\\
am^y_{12}-am^y_{21}-cm^x_{23}+\frac{ac}{b}m^x_{32}+\frac{ab-a^2}{c}m^y_{33}=0\\
-bm^y_{12}+am^y_{21}+\frac{a^2-ab}{c})m^z_{22}+cm^x_{23}+(b-a)m^z_{31}-\frac{ac}{b}m^x_{32}=0\\
-\frac{b^2}{a}m^y_{12}-am^z_{13}+am^y_{21}+\frac{ab-b^2}{c}m^z_{22}+\frac{b^2}{a}m^z_{31}+\frac{b^2-ab}{c}m^y_{33}=0\\
-\frac{bc}{a}m^x_{11}+cm^y_{12}-cm^z_{13}-bm^z_{22}+\frac{2bc}{a}m^z_{31}+\frac{a^2}{c}m^x_{32}-am^y_{33}=0\\
(b-\frac{b^2}{a})m^x_{11}-2bm^z_{13}+\frac{ac}{b}m^x_{23}+\frac{2b^2}{a}m^z_{31}-cm^x_{32}=0\\
cm^x_{11}+\frac{bc}{a}m^y_{12}-(c+\frac{bc}{a})m^y_{21}+am^z_{22}-cm^z_{31}-\frac{b^2}{c}m^x_{32}+\frac{b^2}{a}m^y_{33}=0\\
\frac{c^2}{a}m^y_{12}-\frac{c^2}{a}m^y_{21}+(b+\frac{a^2}{b})m^x_{23}-(a+b)m^x_{32}+(\frac{bc}{a}-c)m^y_{33}=0\\
am^x_{11}-2am^y_{12}+\frac{a^2}{b}m^y_{21}+\frac{c^2}{b})m^z_{22}-\frac{a^2}{b}m^z_{31}-\frac{ac}{b}m^x_{32}+\frac{a^2}{c}m^y_{33}=0\\
-cm^y_{12}+(c+\frac{bc}{a})m^z_{13}+\frac{ac}{b}m^y_{21}-(\frac{bc}{a}+\frac{ac}{b})m^z_{31}=0\\
-bm^x_{11}+bm^y_{21}-\frac{ac^2}{b^2}m^z_{22}-\frac{ac}{b}m^x_{23}+bm^z_{31}+(c+\frac{a^2c}{b^2})m^x_{32}-\frac{b^2}{c}m^y_{33}=0\\
\frac{c^2}{a}m^z_{13}+(c-\frac{a^2c}{b^2})m^z_{22}-(b+\frac{a^2}{b})m^x_{23}-\frac{c^2}{a}m^z_{31}+(a+\frac{a^3}{b^2})m^x_{32}=0.\\
\end{cases}
\end{align}
\end{small}
One sees that (\ref{typeone}) is a system of linear equations with variables
$m^z_{11}$, $m^x_{12}$, $m^y_{13}$, $m^x_{21}$, $m^y_{22}$, $m^z_{23}$, $m^y_{31}$, $m^z_{32}$ and $m^x_{33}$.
Its solution is  either \begin{align*}
m^x_{12}=m^x_{21}=m^z_{23}=m^z_{32}=\frac{1}{2}m^y_{22}, ~m^z_{11}=m^y_{13}=m^y_{31}=m^x_{33}=0,
\end{align*}
or
\begin{align*}
m^x_{12}=\frac{a}{c}m^x_{33},~ m^x_{21}=\frac{b}{c}m^x_{33}, ~m^z_{11}=m^y_{13}=m^y_{22}=m^z_{23}=m^y_{31}=m^z_{32}=0.
\end{align*}
  Similarly, (\ref{typetwo}) is a system of linear equations with variables  $m^y_{11}$, $m^z_{12}$, $m^x_{13}$, $m^z_{21}$, $m^x_{22}$, $m^y_{23}$, $m^x_{31}$, $m^y_{32}$ and $m^z_{33}$. And $(\ref{typetwo})$ is equivalent to \begin{align*}
m^z_{12}=\frac{a}{c}m^z_{33}, ~m^z_{21}=\frac{b}{c}m^z_{33},~m^y_{11}=m^x_{13}=m^x_{22}=m^y_{23}=m^x_{31}=m^y_{32}=0.
\end{align*}
The last system of linear equations (\ref{typethree}) has variables
 $m^x_{11}$, $m^y_{12}$, $m^z_{13}$, $m^y_{21}$, $m^z_{22}$, $m^x_{23}$, $m^z_{31}$, $m^x_{32}$ and $m^y_{33}$. By computations, its solution is
either
\begin{align*}
m^y_{12}=m^z_{13}=m^y_{21}=m^z_{31}=\frac{1}{2}m^x_{11},~ m^z_{22}=m^x_{23}=m^x_{32}=m^y_{33}=0,
\end{align*}
or
\begin{align*}
m^y_{12}=\frac{a}{c}m^y_{33},~ m^y_{21}=\frac{b}{c}m^y_{33}, ~ m^x_{11}=m^z_{13}=m^z_{22}=m^x_{23}=m^z_{31}=m^x_{32}=0.
\end{align*}
Therefore, $\mathrm{Eq}(1)$, $\mathrm{Eq}(2)$ and $\mathrm{Eq}(3)$ implies one of the following systems of equations:
\begin{small}
\begin{align}\label{v1}
\begin{cases}
m^x_{12}=m^x_{21}=m^z_{23}=m^z_{32}=\frac{1}{2}m^y_{22}, ~m^z_{11}=m^y_{13}=m^y_{31}=m^x_{33}=0\\
m^z_{12}=\frac{a}{c}m^z_{33}, ~m^z_{21}=\frac{b}{c}m^z_{33},  ~m^y_{11}=m^x_{13}=m^x_{22}=m^y_{23}=m^x_{31}=m^y_{32}=0\\
m^y_{12}=m^z_{13}=m^y_{21}=m^z_{31}=\frac{1}{2}m^x_{11}, ~m^z_{22}=m^x_{23}=m^x_{32}=m^y_{33}=0
\end{cases}
\end{align}
\begin{align}\label{v2}
\begin{cases}
m^x_{12}=\frac{a}{c}m^x_{33},~ m^x_{21}=\frac{b}{c}m^x_{33}, ~m^z_{11}=m^y_{13}=m^y_{22}=m^z_{23}=m^y_{31}=m^z_{32}=0\\
m^z_{12}=\frac{a}{c}m^z_{33}, ~m^z_{21}=\frac{b}{c}m^z_{33},  ~m^y_{11}=m^x_{13}=m^x_{22}=m^y_{23}=m^x_{31}=m^y_{32}=0\\
m^y_{12}=m^z_{13}=m^y_{21}=m^z_{31}=\frac{1}{2}m^x_{11}, ~m^z_{22}=m^x_{23}=m^x_{32}=m^y_{33}=0
\end{cases}
\end{align}

\begin{align}\label{v3}
\begin{cases}
m^x_{12}=m^x_{21}=m^z_{23}=m^z_{32}=\frac{1}{2}m^y_{22}, ~m^z_{11}=m^y_{13}=m^y_{31}=m^x_{33}=0\\
m^z_{12}=\frac{a}{c}m^z_{33}, ~m^z_{21}=\frac{b}{c}m^z_{33},  ~m^y_{11}=m^x_{13}=m^x_{22}=m^y_{23}=m^x_{31}=m^y_{32}=0\\
m^y_{12}=\frac{a}{c}m^y_{33}, ~m^y_{21}=\frac{b}{c}m^y_{33}, ~ m^x_{11}=m^z_{13}=m^z_{22}=m^x_{23}=m^z_{31}=m^x_{32}=0
\end{cases}
\end{align}
\begin{align}\label{v4}
\begin{cases}
m^x_{12}=\frac{a}{c}m^x_{33},~ m^x_{21}=\frac{b}{c}m^x_{33}, ~m^z_{11}=m^y_{13}=m^y_{22}=m^z_{23}=m^y_{31}=m^z_{32}=0\\
m^z_{12}=\frac{a}{c}m^z_{33}, ~m^z_{21}=\frac{b}{c}m^z_{33},  ~m^y_{11}=m^x_{13}=m^x_{22}=m^y_{23}=m^x_{31}=m^y_{32}=0\\
m^y_{12}=\frac{a}{c}m^y_{33}, ~m^y_{21}=\frac{b}{c}m^y_{33}, ~ m^x_{11}=m^z_{13}=m^z_{22}=m^x_{23}=m^z_{31}=m^x_{32}=0.
\end{cases}
\end{align}
\end{small}
\!Conversely, if any one of (\ref{v1}),(\ref{v2}),(\ref{v3}) and (\ref{v4}) holds, then we can get $\mathrm{Eq}(1)$, $\mathrm{Eq}(2)$ and $\mathrm{Eq}(3)$.

If $\mathrm{Eq}(1)$, $\mathrm{Eq}(2)$ and $\mathrm{Eq}(3)$ implies (\ref{v1}), then we
substitute (\ref{v1}) into the $36$ equations obtained by $\mathrm{Eq}(4)$, $\mathrm{Eq}(5)$ and $\mathrm{Eq}(6)$.  We see that those equations are equivalent to
\begin{align*}
m^k_{ij}= 0, \,\forall k\in \{x,y,z\}, \,\forall i,j \in \{1,2,3\}.
\end{align*} It indicates $\partial_{\mathcal{A}}=0$.

If $\mathrm{Eq}(1)$, $\mathrm{Eq}(2)$ and $\mathrm{Eq}(3)$ implies (\ref{v2}), then we
substitute (\ref{v2}) into the $36$ equations obtained by $\mathrm{Eq}(4)$, $\mathrm{Eq}(5)$ and $\mathrm{Eq}(6)$. We see that those equations are equivalent to
\begin{align*}
m^x_{12}=\frac{a}{c}m^x_{33},\\
m^x_{21}=\frac{b}{c}m^x_{33},\\
m^z_{12}=\frac{a}{c}m^z_{33}, \\
m^z_{21}=\frac{b}{c}m^z_{33}.
\end{align*}
Then  $\partial_{\mathcal{A}}$ is defined  by
\begin{align*}
\begin{cases}
\partial_{\mathcal{A}}(x) &= (x,y,z)\left(
                                      \begin{array}{ccc}
                                        0 & \frac{a}{c}\alpha &0 \\
                                        \frac{b}{c}\alpha & 0 & 0 \\
                                        0 & 0 & \alpha \\
                                      \end{array}
                                    \right)
\left(
                         \begin{array}{c}
                           x\\
                           y \\
                           z
                         \end{array}
                       \right) \\
\partial_{\mathcal{A}}(y) &= (x,y,z)\left(
                                      \begin{array}{ccc}
                                       0 & \frac{a}{c}\beta &0 \\
                                        \frac{b}{c}\beta & 0 & 0 \\
                                        0 & 0 & \beta \\
                                      \end{array}
                                    \right)\left(
                         \begin{array}{c}
                      x \\
                      y\\
                      z
                         \end{array}
                       \right) \\
\partial_{\mathcal{A}}(z) &= (x,y,z)\left(
                                      \begin{array}{ccc}
                                        0 & \frac{a}{c}\gamma &0 \\
                                        \frac{b}{c}\gamma & 0 & 0 \\
                                        0 & 0 & \gamma \\
                                      \end{array}
                                    \right)\left(
                         \begin{array}{c}
                           x\\
                           y \\
                           z
                         \end{array}
                       \right),
\end{cases}
\end{align*}
for some $(\alpha,\beta,\gamma)\in \Bbb{A}_k^3 $. Since
 \begin{align*}
 \begin{cases}
 ayz+bzy+cx^2=0\\
  azx+bxz+cy^2=0\\
  axy+byx+cz^2=0
  \end{cases}
   \end{align*} in $\mathcal{A}^2$, we have \begin{align*}
                     \partial_{\mathcal{A}}(x)&=(x,y,z)\left(
                                      \begin{array}{ccc}
                                         0 & \frac{a}{c}\alpha &0 \\
                                        \frac{b}{c}\alpha & 0 & 0 \\
                                        0 & 0 & \alpha \\
                                      \end{array}
                                    \right)
\left(
                         \begin{array}{c}
                           x\\
                           y \\
                           z
                         \end{array}
                       \right)\\
                       &=\frac{\alpha}{c}(byx+axy+cz^2)\\
                       &=0.
\end{align*}
Similarly, we can show that $\partial_{\mathcal{A}}(y)=\partial_{\mathcal{A}}(z)=0$. Hence $\partial_{\mathcal{A}}=0$.

If $\mathrm{Eq}(1)$, $\mathrm{Eq}(2)$ and $\mathrm{Eq}(3)$ implies (\ref{v3}), then we
substitute (\ref{v3}) into the $36$ equations obtained by $\mathrm{Eq}(4)$, $\mathrm{Eq}(5)$ and $\mathrm{Eq}(6)$.
We see that those equations are equivalent to
$$
m^z_{12}=\frac{a}{c}m^z_{33},
m^z_{21}=\frac{b}{c}m^z_{33},
m^y_{12}=\frac{a}{c}m^y_{33},
m^y_{21}=\frac{b}{c}m^y_{33}.
$$
Then  $\partial_{\mathcal{A}}$ is defined  by
\begin{align*}
\begin{cases}
\partial_{\mathcal{A}}(x) &= (x,y,z)\left(
                                      \begin{array}{ccc}
                                        0 & \frac{a}{c}\alpha &0 \\
                                        \frac{b}{c}\alpha & 0 & 0 \\
                                        0 & 0 & \alpha \\
                                      \end{array}
                                    \right)
\left(
                         \begin{array}{c}
                           x\\
                           y \\
                           z
                         \end{array}
                       \right)\\
\partial_{\mathcal{A}}(y) &= (x,y,z)\left(
                                      \begin{array}{ccc}
                                       0 & \frac{a}{c}\beta &0 \\
                                        \frac{b}{c}\beta & 0 & 0 \\
                                        0 & 0 & \beta \\
                                      \end{array}
                                    \right)\left(
                         \begin{array}{c}
                      x \\
                      y\\
                      z
                         \end{array}
                       \right) \\
\partial_{\mathcal{A}}(z) &= (x,y,z)\left(
                                      \begin{array}{ccc}
                                        0 & \frac{a}{c}\gamma &0 \\
                                        \frac{b}{c}\gamma & 0 & 0 \\
                                        0 & 0 & \gamma \\
                                      \end{array}
                                    \right)\left(
                         \begin{array}{c}
                           x\\
                           y \\
                           z
                         \end{array}
                       \right),
\end{cases}
\end{align*}
for some $(\alpha,\beta,\gamma)\in \Bbb{A}_k^3 $. As above, we can show that $\partial_{\mathcal{A}}=0$.

If $\mathrm{Eq}(1)$, $\mathrm{Eq}(2)$ and $\mathrm{Eq}(3)$ implies (\ref{v4}), then we
substitute (\ref{v4}) into the $36$ equations obtained by $\mathrm{Eq}(4)$, $\mathrm{Eq}(5)$ and $\mathrm{Eq}(6)$.
We see that those equations are equivalent to $$
m^x_{12}=\frac{a}{c}m^x_{33}, m^x_{21}=\frac{b}{c}m^x_{33},
m^z_{12}=\frac{a}{c}m^z_{33}, m^z_{21}=\frac{b}{c}m^z_{33},
m^y_{12}=\frac{a}{c}m^y_{33}, m^y_{21}=\frac{b}{c}m^y_{33}.
$$
Then  $\partial_{\mathcal{A}}$ is defined  by
\begin{align*}
\begin{cases}
\partial_{\mathcal{A}}(x) &= (x,y,z)\left(
                                      \begin{array}{ccc}
                                        0 & \frac{a}{c}\alpha &0 \\
                                        \frac{b}{c}\alpha & 0 & 0 \\
                                        0 & 0 & \alpha \\
                                      \end{array}
                                    \right)
\left(
                         \begin{array}{c}
                           x\\
                           y \\
                           z
                         \end{array}
                       \right)\\
\partial_{\mathcal{A}}(y) &= (x,y,z)\left(
                                      \begin{array}{ccc}
                                       0 & \frac{a}{c}\beta &0 \\
                                        \frac{b}{c}\beta & 0 & 0 \\
                                        0 & 0 & \beta \\
                                      \end{array}
                                    \right)\left(
                         \begin{array}{c}
                      x \\
                      y\\
                      z
                         \end{array}
                       \right)\\
\partial_{\mathcal{A}}(z) &= (x,y,z)\left(
                                      \begin{array}{ccc}
                                        0 & \frac{a}{c}\gamma &0 \\
                                        \frac{b}{c}\gamma & 0 & 0 \\
                                        0 & 0 & \gamma \\
                                      \end{array}
                                    \right)\left(
                         \begin{array}{c}
                           x\\
                           y \\
                           z
                         \end{array}
                       \right),
\end{cases}
\end{align*}
for some $(\alpha,\beta,\gamma)\in \Bbb{A}_k^3 $. As above, we can get $\partial_{\mathcal{A}}=0$.
By the discussion above, we can reach the conclusion that $\partial_{\mathcal{A}}=0$ in Case $3$.

In Case $4$, we have $c=0, a,b\in k^{\times}$. One sees that $S_{a,b,c}^3$ has a $k$-linear basis
$$\{x^3,x^2y,x^2z,xy^2,xyz,xz^2,y^3,y^2z,yz^2,z^3 \}.$$ By computations, $\mathrm{Eq}(1), \mathrm{Eq}(2)$ and $\mathrm{Eq}(3)$ are equivalent to
\begin{align}\label{f3}
\begin{cases}
(b-\frac{a^3}{b^2})m_{11}^z=0\\
(a-\frac{b^3}{a^2})m_{11}^y=0\\
(b+\frac{a^2}{b})m_{12}^z-(a+\frac{a^3}{b^2})m_{21}^z=0\\
(a-b)m_{12}^y-(a-\frac{a^2}{b})m_{13}^z-(a-b)m_{31}^z+(a-\frac{a^2}{b})m_{21}^y=0\\
(a+\frac{b^2}{a})m_{13}^y-(b+\frac{b^3}{a^2})m_{31}^y=0\\
(b-a)m_{22}^z=0\\
(a-\frac{a^2}{b})m_{22}^y-2am_{23}^z+\frac{2a^2}{b}m_{32}^z=0\\
2am_{23}^y-\frac{2a^2}{b}m_{32}^y+(\frac{a^2}{b}-a)m_{33}^z=0\\
m_{33}^y(a-b)=0\\
(a-b)m_{11}^z=0\\
-(b+\frac{a^2}{b})m_{12}^z+(a+\frac{a^3}{b^2})m_{21}^z=0\\
(b-\frac{b^2}{a})m_{11}^x-2bm_{13}^z+\frac{2b^2}{a}m_{31}^z=0\\
(-b+\frac{a^3}{b^2})m_{22}^z=0\\
(b-a)m_{12}^x-(b-a)m_{23}^z+(a-\frac{a^2}{b})m_{32}^z+(\frac{a^2}{b}-a)m_{21}^x=0\\
2bm_{13}^x+(\frac{b^2}{a}-b)m_{33}^z-\frac{2b^2}{a}m_{31}^x=0\\
(b-\frac{a^3}{b^2})m_{22}^x=0\\
(b+\frac{a^2}{b})m_{23}^x-(a+\frac{a^3}{b^2})m_{32}^x=0\\
(b-a)m_{33}^x=0\\
(b-a)m_{11}^y=0\\
(a-\frac{a^2}{b})m_{11}^x-2am_{12}^y+\frac{2a^2}{b}m_{21}^y=0\\
-(a+\frac{b^2}{a})m_{13}^y+(b+\frac{b^3}{a^2})m_{31}^y=0\\
2am_{12}^x+(\frac{a^2}{b}-a)m_{22}^y-\frac{2a^2}{b}m_{21}^x=0\\
(b-a)m_{23}^y+(a-\frac{a^2}{b})m_{13}^x+(\frac{a^2}{b}-a)m_{32}^y+(a-b)m_{31}^x=0\\
(-a+\frac{b^3}{a^2})m_{33}^y=0\\
(a-b)m_{22}^x=0\\
-(b+\frac{a^2}{b})m_{23}^x+(a+\frac{a^3}{b^2})m_{32}^x=0\\
(\frac{a^3}{b^2}-b)m_{33}^x=0.
\end{cases}
\end{align}
If $a\neq b$, then (\ref{f3}) is equivalent to
\begin{align}\label{f3neq}
\begin{cases}
m_{22}^x=m_{33}^x=m_{11}^y=m_{33}^y=m_{11}^z=m_{22}^z=0\\
m_{11}^x=\frac{b}{b-a}(2m_{31}^z-\frac{2a}{b}m_{13}^z)=\frac{b}{b-a}(2m_{12}^y-\frac{2a}{b}m_{21}^y)\\
m_{22}^y=\frac{b}{b-a}(2m_{23}^z-\frac{2a}{b}m_{32}^z)=\frac{b}{b-a}(2m_{12}^x-\frac{2a}{b}m_{21}^x)\\
m_{33}^z=\frac{b}{b-a}(2m_{23}^y-\frac{2a}{b}m_{32}^y)=\frac{b}{b-a}(2m_{31}^x-\frac{2a}{b}m_{13}^x)\\
m_{23}^x=\frac{a}{b}m_{32}^x\\
m_{31}^y=\frac{a}{b}m_{13}^y\\
m_{12}^z=\frac{a}{b}m_{21}^z.
\end{cases}
\end{align}
Substituting (\ref{f3neq}) into the $30$ equations obtained by $\mathrm{Eq}(4)$, $\mathrm{Eq}(5)$ and $\mathrm{Eq}(6)$, we see that those equations are equivalent to
\begin{align*}
\begin{cases}
(a+b)m_{11}^xm_{22}^y=0\\
(a+b)m_{11}^xm_{33}^x=0\\
(a+b)(m_{22}^y)^2=0\\
(a+b)(m_{33}^z)^2=0\\
(a+b)(m_{11}^x)^2=0\\
(a+b)m_{11}^xm_{22}^y=0\\
(a+b)m_{22}^ym_{33}^z=0\\
(a+b)(m_{33}^z)^2=0\\
(a+b)(m_{11}^x)^2=0\\
(a+b)m_{11}^xm_{33}^z=0\\
(a+b)(m_{22}^y)^2=0\\
(a+b)m_{22}^ym_{33}^z=0
\end{cases} \Leftrightarrow \quad  a=-b \quad \text{or}\quad \begin{cases}
a\neq -b\\
m_{11}^x=m_{22}^y=m_{33}^z=0.
\end{cases}
\end{align*}
Hence the equations $\mathrm{Eq}(1)\sim \mathrm{Eq}(6)$ are equivalent to
\begin{align*}
\begin{cases}
m_{22}^x=m_{33}^x=m_{11}^y=m_{33}^y=m_{11}^z=m_{22}^z=0\\
m_{11}^x=m_{31}^z+m_{13}^z=m_{12}^y+m_{21}^y\\
m_{22}^y=m_{23}^z+m_{32}^z=m_{12}^x+m_{21}^x\\
m_{33}^z=m_{23}^y+m_{32}^y=m_{31}^x+m_{13}^x\\
m_{23}^x=-m_{32}^x\\
m_{31}^y=-m_{13}^y\\
m_{12}^z=-m_{21}^z
\end{cases} \text{when}\quad a=-b\neq 0, \quad \text{and}\quad
\end{align*}
 they are equivalent to
\begin{align*}
\begin{cases}
m_{ii}^x=m_{ii}^y=m_{ii}^z=0,\forall i\in \{1,2,3\}\\
m_{12}^x=\frac{a}{b}m_{21}^x\\
m_{31}^x=\frac{a}{b}m_{13}^x\\
m_{12}^y=\frac{a}{b}m_{21}^y\\
m_{23}^y=\frac{a}{b}m_{32}^y\\
m_{23}^z=\frac{a}{b}m_{32}^z\\
m_{31}^z=\frac{a}{b}m_{13}^z\\
m_{23}^x=\frac{a}{b}m_{32}^x\\
m_{31}^y=\frac{a}{b}m_{13}^y\\
m_{12}^z=\frac{a}{b}m_{21}^z
\end{cases}\text{when} \quad a,b\in k^{\times}, a^2\neq b^2.
\end{align*}
Now, let consider the case $a=b$.  In this case,  (\ref{f3}) is equivalent to
\begin{align}\label{f3eq}
\begin{cases}
m_{12}^x=m_{21}^x\\
m_{13}^x=m_{31}^x\\
m_{23}^x=m_{32}^x\\
m_{12}^y=m_{21}^y\\
m_{13}^y=m_{31}^y\\
m_{23}^y=m_{32}^y\\
m_{12}^z=m_{21}^z\\
m_{13}^z=m_{31}^z\\
m_{23}^z=m_{32}^z.
\end{cases}
\end{align}
Substituting (\ref{f3eq}) into the $30$ equations obtained by $\mathrm{Eq}(4)$, $\mathrm{Eq}(5)$ and $\mathrm{Eq}(6)$, one sees that all those equations hold.
Therefore, the equations $\mathrm{Eq}(1)\sim \mathrm{Eq}(6)$ are equivalent to (\ref{f3eq}).

By the discussion above, we can reach the following conclusions:

(i) If $a,b\in k^{\times}, a^2\neq b^2$ and $c=0$, then  $\partial_{\mathcal{A}}$ is defined  by
\begin{align*}
\begin{cases}
\partial_{\mathcal{A}}(x) &= (x,y,z)\left(
                                      \begin{array}{ccc}
                                        0 & \frac{a}{b}\alpha_1 &\alpha_2\\
                                        \alpha_1 & 0 & \frac{a}{b}\alpha_3 \\
                                         \frac{a}{b}\alpha_2  & \alpha_3 & 0 \\
                                      \end{array}
                                    \right)
\left(
                         \begin{array}{c}
                           x\\
                           y \\
                           z
                         \end{array}
                       \right) \\
\partial_{\mathcal{A}}(y) &= (x,y,z)\left(
                                      \begin{array}{ccc}
                                        0 & \frac{a}{b}\beta_1 &\beta_2\\
                                        \beta_1 & 0 & \frac{a}{b}\beta_3 \\
                                         \frac{a}{b}\beta_2 & \beta_3 & 0 \\
                                      \end{array}
                                    \right)\left(
                         \begin{array}{c}
                      x \\
                      y\\
                      z
                         \end{array}
                       \right) \\
\partial_{\mathcal{A}}(z) &= (x,y,z)\left(
                                      \begin{array}{ccc}
                                        0 & \frac{a}{b}\gamma_1 &\gamma_2\\
                                        \gamma_1& 0 & \frac{a}{b}\gamma_3 \\
                                         \frac{a}{b}\gamma_2  & \gamma_3 & 0 \\
                                      \end{array}
                                    \right)\left(
                         \begin{array}{c}
                           x\\
                           y \\
                           z
                         \end{array}
                       \right),
\end{cases}
\end{align*}
for some $(\alpha_1,\alpha_2,\alpha_3,\beta_1,\beta_2,\beta_3, \gamma_1, \gamma_2,\gamma_3)\in \Bbb{A}_k^9 $.  Since
 $$ ayz+bzy=azx+bxz=axy+byx=0$$ in $\mathcal{A}^2$, we have \begin{align*}
                     \partial_{\mathcal{A}}(x)&=(x,y,z)\left(
                                      \begin{array}{ccc}
                                        0 & \frac{a}{b}\alpha_1 &\alpha_2\\
                                        \alpha_1 & 0 & \frac{a}{b}\alpha_3 \\
                                         \frac{a}{b}\alpha_2  & \alpha_3 & 0 \\
                                      \end{array}
                                    \right)
\left(
                         \begin{array}{c}
                           x\\
                           y \\
                           z
                         \end{array}
                       \right)\\
                       &=\alpha_1yx+\frac{a}{b}\alpha_1xy+\alpha_2xz+\frac{a}{b}\alpha_2zx+\alpha_3zy+\frac{a}{b}\alpha_3yz\\
                       &=0.
\end{align*}
Similarly, we can show that $  \partial_{\mathcal{A}}(y)=\partial_{\mathcal{A}}(z)=0$. Hence $\partial_{\mathcal{A}}=0$.

(ii)If $a=-b\in k^{\times}, c=0$, then
 $\partial_{\mathcal{A}}$ is defined  by
\begin{align*}
\begin{cases}
\partial_{\mathcal{A}}(x) &= (x,y,z)\left(
                                      \begin{array}{ccc}
                                        \alpha_1 & \alpha_3 &\alpha_4\\
                                        \beta_1-\alpha_3 & 0 & \alpha_2 \\
                                         \gamma_1-\alpha_4 & -\alpha_2 & 0 \\
                                      \end{array}
                                    \right)
\left(
                         \begin{array}{c}
                           x\\
                           y \\
                           z
                         \end{array}
                       \right) \\
\partial_{\mathcal{A}}(y) &= (x,y,z)\left(
                                      \begin{array}{ccc}
                                        0 & \beta_3 &\beta_2\\
                                       \alpha_1-\beta_3 & \beta_1 & \beta_4 \\
                                       -\beta_2 & \gamma_1-\beta_4 & 0 \\
                                      \end{array}
                                    \right)\left(
                         \begin{array}{c}
                      x \\
                      y\\
                      z
                         \end{array}
                       \right) \\
\partial_{\mathcal{A}}(z) &= (x,y,z)\left(
                                      \begin{array}{ccc}
                                        0 & \gamma_2 &\gamma_3\\
                                        -\gamma_2& 0 & \gamma_4 \\
                                         \alpha_1-\gamma_3  & \beta_1-\gamma_4 & \gamma_1 \\
                                      \end{array}
                                    \right)\left(
                         \begin{array}{c}
                           x\\
                           y \\
                           z
                         \end{array}
                       \right),
\end{cases}
\end{align*}
for some $(\alpha_1,\alpha_2,\alpha_3,\alpha_4,\beta_1,\beta_2,\beta_3,\beta_4, \gamma_1, \gamma_2,\gamma_3,\gamma_4)\in \Bbb{A}_k^{12} $.
Since
 \begin{align*}
 \begin{cases}yz-zy=0\\
               zx-xz=0\\
                xy-yx=0
  \end{cases}
  \end{align*} in $\mathcal{A}^2$, we have
\begin{align*}
&\partial_{\mathcal{A}}(x) = (x,y,z)\left(
                                      \begin{array}{ccc}
                                        \alpha_1 & \alpha_3 &\alpha_4\\
                                        \beta_1-\alpha_3 & 0 & \alpha_2 \\
                                         \gamma_1-\alpha_4 & -\alpha_2 & 0 \\
                                      \end{array}
                                    \right)
\left(
                         \begin{array}{c}
                           x\\
                           y \\
                           z
                         \end{array}
                       \right)\\
&=\alpha_1x^2+\alpha_3xy+(\beta_1-\alpha_3)yx+\alpha_4xz+(\gamma_1-\alpha_4)zx+\alpha_2yz-\alpha_2zy\\
&=\alpha_1x^2+\beta_1yx+\gamma_1zx=\alpha_1x^2+\beta_1xy+\gamma_1xz
\end{align*}
Similarly, we can show that
\begin{align*}
\partial_{\mathcal{A}}(y)=\alpha_1yx+\beta_1y^2+\gamma_1yz\\
\partial_{\mathcal{A}}(z)=\alpha_1xz+\beta_1yz +\gamma_1z^3.
\end{align*}
Let $\alpha=\alpha_1,\beta=\beta_1$ and $\gamma= \gamma_1$. Then $\partial_{\mathcal{A}}$ is defined by
\begin{align*}
\begin{cases}
\partial_{\mathcal{A}}(x)=\alpha x^2+\beta xy +\gamma xz\\
\partial_{\mathcal{A}}(y)=\alpha yx +\beta y^2+\gamma yz\\
\partial_{\mathcal{A}}(z)=\alpha xz +\beta yz +\gamma z^2, (\alpha,\beta,\gamma)\in \Bbb{A}_k^3.
\end{cases}
\end{align*}

(iii)If $a=b\in k^{\times}, c=0$, then
 $\partial_{\mathcal{A}}$ is defined by
 \begin{align*}
 \begin{cases}
\partial_{\mathcal{A}}(x) = (x,y,z)M^x
\left(
                         \begin{array}{c}
                           x\\
                           y \\
                           z
                         \end{array}
                       \right) \\
\partial_{\mathcal{A}}(y) = (x,y,z)M^y\left(
                         \begin{array}{c}
                      x \\
                      y\\
                      z
                         \end{array}
                       \right)\\
\partial_{\mathcal{A}}(z) = (x,y,z)M^z\left(
                         \begin{array}{c}
                           x\\
                           y \\
                           z
                         \end{array}
                       \right)
\end{cases},
\end{align*}
where $M^x=(m_{ij}^x)_{3\times 3}, M^y=(m_{ij}^y)_{3\times 3}$ and $M^z=(m_{ij}^z)_{3\times 3}$ are $3\times 3$ symmetric matrixes. Since
 $$ yz+zy=zx+xz=xy+yx=0$$ in $\mathcal{A}^2$, we have
                     \begin{align*}
&\quad \partial_{\mathcal{A}}(x) = (x,y,z)M^x
\left(
                         \begin{array}{c}
                           x\\
                           y \\
                           z
                         \end{array}
                       \right)\\
&=m_{11}^xx^2+ m_{12}^xxy+m_{21}^xyx+m_{22}^xy^2+m_{13}^xxz+m^x_{31}zx+m_{23}^xyz+m^x_{32}zy+m^x_{33}z^2\\
&=m_{11}^xx^2+m_{22}^xy^2+m^x_{33}z^2.
\end{align*}
Similarly, we can show that
$$ \partial_{\mathcal{A}}(y)
=m_{11}^yx^2+m_{22}^yy^2+m^y_{33}z^2$$
  and
   $$ \partial_{\mathcal{A}}(z)=m_{11}^zx^2+m_{22}^zy^2+m^z_{33}z^2.$$  Let $m_{1i}=m_{ii}^x, m_{2i}=m_{ii}^y$ and $m_{3i}=m_{ii}^z$, $i=1,2,3$.
Then \begin{align*}
\left(
                         \begin{array}{c}
                           \partial_{\mathcal{A}}(x)\\
                           \partial_{\mathcal{A}}(y)\\
                           \partial_{\mathcal{A}}(z)
                         \end{array}
                       \right)=M\left(
                         \begin{array}{c}
                           x^2\\
                           y^2\\
                           z^2
                         \end{array}
                       \right).
\end{align*}

\end{proof}

\begin{rem}\label{poly}
  When $a=b$ and $c=0$, the $3$-dimensional DG Sklyanin algebra $\mathcal{A}$ in Theorem \ref{diffstr} is just the DG algebra $\mathcal{A}_{\mathcal{O}_{-1}(k^3)}(M)$ in \cite{MWZ}. Note that Theorem \ref{diffstr} $(3)$ coincides with \cite[Proposition 2.1]{MWZ}.
\end{rem}

\section{Homological properties}
In this section, we study the homological properties of $3$-dimensional DG Sklyanin algebras.
Let $\mathcal{A}$ be a $3$-dimensional DG Sklyanin algebra with $\mathcal{A}^{\#}=S_{a,b,c}$, $(a,b,c)\in \Bbb{P}_k^2-\mathfrak{D}$.
By the differential structure classified in Theorem \ref{diffstr}, we can divide it into the following three cases:
 $$ \text{Case}\,1: |a|\neq |b| \,\,\text{or}\,\, c\neq 0; \,\,\text{Case}\,2: a=-b, c=0;\,\, \text{Case}\,3: a=b,c=0.$$

 \subsection{Case $1$} In this case, we have $\partial_{\mathcal{A}}=0$ and hence $H(\mathcal{A})=\mathcal{A}^{\#}=S_{a,b,c}$. The Calabi-Yau property of $\mathcal{A}$ is immediate from the following lemma.
\begin{lem}\cite[Proposition 3.2]{MYY}\label{imlem}
Let $\mathcal{A}$ be a connected cochain DG algebra such that $$H(\mathcal{A})=k\langle \lceil x\rceil,\lceil y\rceil,\lceil z\rceil\rangle/\left(\begin{array}{ccc}
                                 a\lceil y\rceil \lceil z\rceil +b\lceil z\rceil \lceil y\rceil + c\lceil x\rceil^{2}\\
                               a\lceil z\rceil \lceil x\rceil +b\lceil x\rceil \lceil z\rceil + c\lceil y\rceil^{2} \\
                                 a\lceil x\rceil \lceil y\rceil +b\lceil y\rceil \lceil x\rceil+ c\lceil z\rceil^{2}
                                 \end {array}\right),$$
where $(a,b,c)\in \Bbb{P}^2_k-\mathfrak{D}$ and $x,y,z\in \mathrm{ker}(\partial_{\mathcal{A}}^1)$. Then $\mathcal{A}$ is a Calabi-Yau DG algebra.
\end{lem}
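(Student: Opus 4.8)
The plan is to verify directly the two requirements in Definition \ref{basicdef}(4): that $\mathcal{A}$ is homologically smooth and that $R\Hom_{\mathcal{A}^e}(\mathcal{A},\mathcal{A}^e)\cong \Sigma^{-n}\mathcal{A}$ in $\mathrm{D}((\mathcal{A}^e)^{op})$ for a suitable integer $n$. The key external input is that for $(a,b,c)\in \Bbb{P}_k^2-\mathfrak{D}$ the graded algebra $S_{a,b,c}=H(\mathcal{A})$ is a Koszul Artin--Schelter regular algebra of global dimension $3$ and, being defined by a (super)potential, is a genuine (untwisted) graded $3$-Calabi--Yau algebra. Since $\lceil x\rceil,\lceil y\rceil,\lceil z\rceil$ are the classes of degree-one cocycles and the three Sklyanin relations vanish in $H(\mathcal{A})$, each of $ayz+bzy+cx^2$, $azx+bxz+cy^2$, $axy+byx+cz^2$ is a boundary $\partial_{\mathcal{A}}(w_i)$ with $w_i\in \mathcal{A}^1$; these chosen lifts $w_i$ are the bookkeeping data that will glue the homological structure of $\mathcal{A}$ to that of its cohomology.

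First I would establish homological smoothness by lifting the length-$3$ Koszul resolution $S_{a,b,c}\leftarrow S_{a,b,c}^3\leftarrow S_{a,b,c}^3\leftarrow S_{a,b,c}$ of ${}_{S_{a,b,c}}k$ to a semi-free resolution of ${}_{\mathcal{A}}k$. Using the cocycles $x,y,z$ for the first syzygy and the boundaries $w_i$ to correct the second, one builds a semi-free DG $\mathcal{A}$-module with a finite semi-basis (one generator in homological degree $0$, three matching $x,y,z$, three matching the relations, one top generator), whose differential is the Koszul differential twisted by the $w_i$ together with $\partial_{\mathcal{A}}$. Koszulness of $S_{a,b,c}$ is precisely what guarantees that this candidate is acyclic over $k$. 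Because ${}_{\mathcal{A}}k$ then admits a minimal semi-free resolution with a finite semi-basis, it is compact by \cite[Proposition 3.3]{MW1}, so $\mathcal{A}$ is homologically smooth by Definition \ref{basicdef}(3).

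Next I would move to the enveloping algebra $\mathcal{A}^e=\mathcal{A}\otimes \mathcal{A}^{op}$. The graded $3$-Calabi--Yau structure of $S_{a,b,c}$ supplies a self-dual bimodule Koszul resolution of $S_{a,b,c}$ over $S_{a,b,c}^e$; lifting it to a semi-free resolution of $\mathcal{A}$ over $\mathcal{A}^e$ by the same twisting-by-$w_i$ device, I would apply $R\Hom_{\mathcal{A}^e}(-,\mathcal{A}^e)$ and read off $R\Hom_{\mathcal{A}^e}(\mathcal{A},\mathcal{A}^e)$ from the palindromic symmetry of the bimodule complex. The potential underlying $S_{a,b,c}$ makes this symmetry an honest duality with trivial Nakayama automorphism, so the outcome is a plain suspension $\Sigma^{-n}\mathcal{A}$ rather than a twist by an automorphism, with $n$ the total cohomological degree of the top Koszul class. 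Together with the previous step this yields Definition \ref{basicdef}(4).

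The hard part will be the bimodule computation in the last step. At the one-sided level the argument reduces to the self-duality of the Koszul complex, but the Calabi--Yau conclusion requires controlling the full DG $\mathcal{A}^e$-module structure of the dualizing object: since $\partial_{\mathcal{A}}$ need not vanish, the comparison map must be checked to be $\mathcal{A}$-\emph{bilinear} and compatible with the differential, not merely an isomorphism of underlying graded modules. Concretely, one must propagate the Sklyanin potential and the lifts $w_i$ through the bimodule resolution and confirm that the induced Nakayama data is trivial, so that $n$ is unambiguous and the duality is untwisted; this compatibility, rather than the construction of the resolution itself, is the delicate core of the proof.
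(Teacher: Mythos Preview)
The paper itself does not prove Lemma~\ref{imlem}; it is quoted from \cite[Proposition~3.2]{MYY} and used as a black box in Proposition~\ref{trivaldiff}. So there is no in-paper argument to compare against beyond the bare citation.

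Your outline is the natural direct attack and is almost certainly what \cite{MYY} does (the title there advertises a sufficient Calabi--Yau criterion, and the expected criterion is exactly that $H(\mathcal{A})$ be Koszul and graded Calabi--Yau, which $S_{a,b,c}$ is). The Eilenberg--Moore lift of the one-sided and then bimodule Koszul resolutions, corrected by the chosen preimages $w_i$, is the right mechanism; finiteness of the Koszul complex gives compactness of ${}_{\mathcal{A}}k$ and hence homological smoothness, and the superpotential origin of $S_{a,b,c}$ makes the bimodule self-duality untwisted. You have also correctly isolated the genuine subtlety: the isomorphism $R\Hom_{\mathcal{A}^e}(\mathcal{A},\mathcal{A}^e)\simeq\Sigma^{-n}\mathcal{A}$ must be realised in $\mathrm{D}((\mathcal{A}^e)^{op})$, not merely at the level of cohomology.

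One point to tighten: you leave $n$ as ``the total cohomological degree of the top Koszul class'' without computing it, and your description of the semi-basis (``one generator in homological degree~$0$, three matching $x,y,z$, \dots'') suggests the later generators might sit in positive degree. Because $S_{a,b,c}$ is Koszul with generators in degree~$1$, the minimal semi-free resolutions of ${}_{\mathcal{A}}k$ and of ${}_{\mathcal{A}^e}\mathcal{A}$ have their entire semi-bases in cohomological degree~$0$ --- this is precisely the Koszul condition of Definition~\ref{basicdef}(2) --- and the shift $n$ then comes out to~$0$. Making this degree bookkeeping explicit is part of the argument, not an afterthought.
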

Note that $H(\mathcal{A})$ in Lemma \ref{imlem} is a Koszul graded algebra. Thus the DG algebra $\mathcal{A}$ in Lemma \ref{imlem} is Koszul by
 \cite[Proposition 2.3]{HW}. By Lemma \ref{imlem}, we show the the following proposition.

\begin{prop}\label{firstcase}
Let $\mathcal{A}$ be a $3$-dimensional DG Sklyanin algebra with $\mathcal{A}^{\#}=S_{a,b,c}$, $(a,b,c)\in \Bbb{P}_k^2-\mathfrak{D}$. If we have either $|a|\neq |b|$ or $c\neq 0$,
then $\mathcal{A}$ is a Koszul Calabi-Yau DG algebra with zero differential.
\end{prop}

\subsection{Case $2$} In this case, $\partial_{\mathcal{A}}$ is defined by
\begin{align*}
\begin{cases}
\partial_{\mathcal{A}}(x)=\alpha x^2+\beta xy +\gamma xz\\
\partial_{\mathcal{A}}(y)=\alpha yx +\beta y^2+\gamma yz\\
\partial_{\mathcal{A}}(z)=\alpha xz +\beta yz +\gamma z^2,\, \text{for some}\,(\alpha,\beta,\gamma)\in \Bbb{A}_k^3.
\end{cases}
\end{align*}
If $(\alpha, \beta, \gamma)=(0,0,0)$, then $\partial_{\mathcal{A}}=0$ and hence $H(\mathcal{A})=\mathcal{A}^{\#}=S_{a,-a,0}$ with $a\in k^{\times}$. By Lemma \ref{imlem}, $\mathcal{A}$ is a Calabi-Yau DG algebra. Since $H(\mathcal{A})$ is a
Koszul graded algebra, the DG algebra $\mathcal{A}$ is Koszul by
\cite[Proposition 2.3]{HW}.

If $(\alpha, \beta, \gamma)\in \Bbb{A}_k^3-\{(0,0,0)\}$, then $\partial_{\mathcal{A}}\neq 0$.  We want to study the homological properties of $\mathcal{A}$. For this, we consider the isomorphism problem first.
Let $\mathcal{A}_1$ be the DG algebra such that $$\mathcal{A}_1^{\#}=k[x',y',z'], |x'|=|y'|=|z'|=1$$ and  $\partial_{\mathcal{A}_1}$ is defined by
\begin{align*}
\begin{cases}
\partial_{\mathcal{A}}(x')= x'^2\\
\partial_{\mathcal{A}}(y')= y'x' \\
\partial_{\mathcal{A}}(z')= x'z'.
\end{cases}
\end{align*}
We claim that $\mathcal{A}\cong \mathcal{A}_1$.
Since $(\alpha,\beta,\gamma)\neq (0,0,0)$,
we let $\alpha\neq 0$ without the loss of generality. Define a morphism $\theta: \mathcal{A}_1\to \mathcal{A}$ of graded algebras by
$$\left(\begin{array}{c}
                           \theta(x') \\
                           \theta(y') \\
                           \theta(z')
                         \end{array}
                       \right)= \left(
                                      \begin{array}{ccc}
                                        \alpha & \beta  &\gamma\\
                                        0 & 1 & 0 \\
                                         0  & 0 & 1 \\
                                      \end{array}
                                    \right)\left(
                         \begin{array}{c}
                           x \\
                           y \\
                           z
                         \end{array}
                       \right).$$
And we have $\theta\circ \partial_{\mathcal{A}_1} =\partial_{\mathcal{A}}\circ \theta$ since
\begin{align*}
\theta\circ \partial_{\mathcal{A}_1}(x')&=\theta(x'^2)=\theta(x')\theta(x')=(\alpha x+\beta y+\gamma z)(\alpha x+\beta y+\gamma z)\\
                                        &=\alpha(\alpha x^2+\beta xy+\gamma xz)+\beta(\alpha yx+\beta y^2+\gamma yz)+\gamma (\alpha xz+\beta yz+\gamma z^2)\\
                                        &=\partial_{\mathcal{A}}(\alpha x+\beta y+\gamma z)=\partial_{\mathcal{A}}\circ \theta(x'),\\
\theta\circ \partial_{\mathcal{A}_1}(y')&=\theta(y'x')=\theta(y')\theta(x')=      y(\alpha x+\beta y+\gamma z)=\partial_{\mathcal{A}}(y)=\partial_{\mathcal{A}}\circ \theta(y'),\\
\theta\circ \partial_{\mathcal{A}_1}(z')&=\theta(x'z')=\theta(x')\theta(z')=(\alpha x+\beta y+\gamma z)z=\partial_{\mathcal{A}}(z)=\partial_{\mathcal{A}}\circ \theta(z').
\end{align*}
As $\left|
                                      \begin{array}{ccc}
                                        \alpha & \beta  &\gamma\\
                                        0 & 1 & 0 \\
                                         0  & 0 & 1 \\
                                      \end{array}
                                    \right|=\alpha \neq 0$, $\theta$ is an automorphism of DG algebras.
                                    One sees that $\mathcal{A}_1$ is actually the special case of $\mathcal{A}$ when $(\alpha, \beta, \gamma)=(1,0,0)$. Hence we only need to study the homological properties of $\mathcal{A}$ when $\partial_{\mathcal{A}}$ is defined by
\begin{align*}
\begin{cases}
\partial_{\mathcal{A}}(x)= x^2\\
\partial_{\mathcal{A}}(y)= yx \\
\partial_{\mathcal{A}}(z)= xz.
\end{cases}
\end{align*}
    In this special case, we have
\begin{align*}
\partial_{\mathcal{A}}(y^2)=(yx)y-y(yx)=0\\
\partial_{\mathcal{A}}(yz)=(yx)z-y(xz)=0\\
\partial_{\mathcal{A}}(z^2)=(xz)z-z(xz)=0.
\end{align*}
So $\mathrm{im}(\partial_{\mathcal{A}}^1)=k x^2\oplus kxy\oplus kxz $ and
$$\mathrm{ker}(\partial_{\mathcal{A}}^2)=kx^2\oplus kxy\oplus kxz \oplus ky^2\oplus kyz\oplus kz^2=\mathcal{A}^2.$$
Hence $H^2(\mathcal{A})= k\lceil y^2\rceil\oplus k\lceil yz\rceil\oplus k\lceil z^2\rceil.$ We inductively assume that $\mathcal{A}^{2k}=\mathrm{ker}(\partial_{\mathcal{A}}^{2k})$ when $k\le l-1$. Since $\mathcal{A}^{2l}=\mathcal{A}^{2l-2}\cdot \mathcal{A}^2$, one sees that
$\mathcal{A}^{2l}=\mathrm{ker}(\partial_{\mathcal{A}}^{2l})$ by the Leibniz rule. Thus $\mathcal{A}^{2n}=\mathrm{ker}(\partial_{\mathcal{A}}^{2n})$ for any $n\ge 1$.
Since \begin{align*}
\begin{cases}
\partial_{\mathcal{A}}(x)= x^2\\
\partial_{\mathcal{A}}(y)= yx \\
\partial_{\mathcal{A}}(z)= xz
\end{cases}
\end{align*}
and $\mathrm{ker}(\partial_{\mathcal{A}}^{2n-2})=\mathcal{A}^{2n-2}$, it is easy to check that
$$\mathrm{im}(\partial_{\mathcal{A}}^{2n-1})=\bigoplus_{\omega_1=1}^{2n}\bigoplus_{\stackrel{\sum\limits_{j=2}^3\omega_j=2n-\omega_1}{\omega_j\ge 0, j=2,3}}k x^{\omega_1}y^{\omega_2}z^{\omega_3}.$$
 Since $$\mathcal{A}^{2n}=\bigoplus_{\stackrel{\sum\limits_{j=1}^3\omega_j=2n}{\omega_j\ge 0, j=1,\cdots,n}}k x^{\omega_1}y^{\omega_2}z^{\omega_3},$$ we have
 $$H^{2n}(\mathcal{A})=\bigoplus_{\stackrel{\sum\limits_{j=2}^n\omega_j=2n}{\omega_j\ge 0, j=2,3}}k y^{\omega_2}z^{\omega_3}.$$
For any $n\ge 2$, any cocycle element in $\mathcal{A}^{2n+1}$ can be written as
$xf+yg+zh$ for some $f,g,h\in \mathcal{A}^{2k}$. We have $\partial_{\mathcal{A}}(xf+yg+zh)=x^2f+xyg+xzh=x(xf+yg+zh)=0$.
So $xf+yg+zh=0$. Hence, $\mathrm{ker}(\partial_{\mathcal{A}}^{2k+1})=0$ and then $H^{2k+1}(\mathcal{A})=0$. Therefore,
$$H(\mathcal{A})=k[\lceil y^2\rceil,\lceil yz\rceil, \lceil z^2\rceil]/(\lceil y^2\rceil \lceil z^2\rceil-\lceil yz\rceil^2)$$
is a graded Gorenstein algebra by \cite[5.10]{Lev}. Then $\mathcal{A}$ is a Gorenstein DG algebra by \cite[Proposition 1]{Gam}. The left graded $H(\mathcal{A})$-module ${}_{H(\mathcal{A})}k$ admits a minimal free resolution:
\begin{small}
 \begin{align*}
 &\cdots \stackrel{d_n}{\to} H(A) \otimes \left(
                                                   \begin{array}{c}
                                                     ke_{(n-1)1}\\
                                                     \oplus \\
                                                     ke_{(n-1)2}\\
                                                     \end{array}
                                                 \right)\stackrel{d_{n-1}}{\to}\cdots \stackrel{d_6}{\to} H(A)\otimes \left(
                                                   \begin{array}{c}
                                                     ke_{51}\\
                                                     \oplus \\
                                                     ke_{52}\\
                                                     \end{array}
                                                 \right)  \stackrel{d_5}{\to} H(A)\otimes \left(
                                                   \begin{array}{c}
                                                     ke_{41}\\
                                                     \oplus \\
                                                     ke_{42}\\
                                                     \end{array}
                                                 \right) \\
                                                  &   \stackrel{d_4}{\to} H(A)\otimes \left(
                                                   \begin{array}{c}
                                                     ke_{t_1}\\
                                                     \oplus \\
                                                     ke_{t_2}\\
                                                     \oplus \\
                                                     ke_{t_3}
                                                   \end{array}
                                                 \right)
  \stackrel{d_3}{\to}
   H(A)\otimes \left(
                                                   \begin{array}{c}
                                                     ke_{r_1}\\
                                                     \oplus \\
                                                     ke_{r_2}\\
                                                     \oplus \\
                                                     ke_{r_3}\\
                                                     \oplus \\
                                                     ke_{r_4}
                                                   \end{array}
                                                 \right)\stackrel{d_2}{\to} H(A)\otimes \left(
                                                   \begin{array}{c}
                                                     ke_{1}\\
                                                     \oplus \\
                                                     ke_{2} \\
                                                     \oplus \\
                                                     ke_{3}
                                                   \end{array}
                                                 \right) \stackrel{d_1}{\to} H(A)\stackrel{H(\varepsilon)}{\to} k\to 0,
\end{align*}
   \end{small}
where
\begin{align*}
&d_1(e_{1})=\lceil y^2\rceil, d_1(e_{2})=\lceil yz\rceil, d_1(e_3)=\lceil z^2\rceil; \\
&d_2(e_{r_1})=\lceil y^2\rceil e_{2}-\lceil yz\rceil e_{1} \\
&d_2(e_{r_2})= \lceil y^2\rceil e_{3}-\lceil z^2\rceil e_{1}\\
&d_2(e_{r_3})=\lceil yz\rceil e_{3}-\lceil z^2\rceil e_{2} \\
&d_2(e_{r_4})=\lceil y^2\rceil e_{3}-\lceil yz\rceil e_{2}; \\
&d_3(e_{t_1})=\lceil yz\rceil e_{r_1}-\lceil y^2\rceil e_{r_2}+\lceil y^2\rceil e_{r_4}\\
&d_3(e_{t_2})=\lceil z^2\rceil e_{r_1}-\lceil yz\rceil e_{r_2}+\lceil y^2\rceil e_{r_3}\\
&d_3(e_{t_3})=\lceil z^2\rceil e_{r_1}-\lceil yz\rceil e_{r_2}+\lceil yz\rceil e_{r_4}\\
&d_4(e_{41})=\lceil yz\rceil e_{t_1}-\lceil y^2\rceil e_{t_3}\\
&d_4(e_{42})=\lceil z^2\rceil e_{t_1}-\lceil yz\rceil e_{t_3}\\
&............ \\
&d_n(e_{n1})=\lceil yz\rceil e_{(n-1)1}-\lceil y^2\rceil e_{(n-1)2}\\
&d_n(e_{n2})=\lceil z^2\rceil e_{(n-1)1}-\lceil yz\rceil e_{(n-1)2}, n\ge 5.
\end{align*}
According to the constructing procedure of Eilenberg-Moore resolution, we can construct a  semi-free resolution $F$ of the left DG $\mathcal{A}$-module $k$. The Eilenberg-Moore resolution $F$ admits a semibasis which is one to one correspondence with the free basis of the free resolution above.
We have
$$F^{\#}=\mathcal{A}^{\#}\oplus \mathcal{A}^{\#}\otimes [(\bigoplus\limits_{i=1}^3 k \Sigma e_i)\oplus (\bigoplus \limits_{j=1}^4k\Sigma^2 e_{r_j})\oplus (\bigoplus\limits_{l=1}^3k\Sigma^3 e_{t_l})\oplus (\bigoplus\limits_{s=4}^{+\infty}\bigoplus\limits_{t=1}^2k\Sigma^se_{st})]$$
 $|\Sigma e_i|=1, i\in\{1,2,3\}$, $|\Sigma^2 e_{r_j}|=2,j \in \{1,2,3,4\}$, $|\Sigma^3 e_{t_l}|=3, l\in \{1,2,3\}$ and $|\Sigma^s e_{st}|=s, s\ge 4, t\in\{1,2\}.$ From the constructing procedure of Eilenberg-Moore resolution in  \cite[P.279-280]{FHT2}, one sees that $F$ admits a semi-free filtration
 $$F(0)\subset F(1)\subset F(2)\subset \cdots \subset F(n)\subset F(n+1)\subset \cdots, $$
 where \begin{align*}
 &F(0)^{\#}=\mathcal{A}^{\#}, \\
& F(1)^{\#}=F(0)^{\#}\oplus \mathcal{A}^{\#}\otimes (\bigoplus\limits_{i=1}^3  k \Sigma e_i)\\
& F(2)^{\#}=F(1)^{\#}\oplus \mathcal{A}^{\#}\otimes (\bigoplus \limits_{j=1}^4k\Sigma^2 e_{r_j}),\\
& F(3)^{\#}=F(2)^{\#}\oplus \mathcal{A}^{\#}\otimes (\bigoplus\limits_{l=1}^3k\Sigma^3 e_{t_l})\\
&F(n)=F(3)^{\#}\oplus \mathcal{A}^{\#}\otimes (\bigoplus\limits_{s=4}^{n}\bigoplus\limits_{t=1}^2k\Sigma^se_{st}), n\ge 4.
 \end{align*}
 One sees that $F$ is minimal from the degrees of its semi-basis and the semi-free filtration above. By the minimality of $F$, we know that $\mathcal{A}$ is neither Koszul nor homologically smooth. In summary,  we obtain the following proposition.

\begin{prop}\label{polydg}
Let $\mathcal{A}$ be a connected cochain DG algebra such that $\mathcal{A}^{\#}=S_{a,-a,0}$ with $a\in k^{\times}$. Then we have the following statements.
\begin{enumerate}
\item If $\partial_{\mathcal{A}}=0$, then $\mathcal{A}$ is a Koszul and Calabi-Yau DG algebra.
\item
If $\partial_{\mathcal{A}}\neq 0$, then $\mathcal{A}$ is a Gorenstein DG algebra, but it is neither Koszul nor homologically smooth.
\end{enumerate}
\end{prop}
\subsection{Case $3$}In this cases, $\mathcal{A}^{\#}=S_{a,a,0}$ with $a\in k^{\times}$, and $\partial_{\mathcal{A}}$ is determined by a matrix $M=(m_{ij})_{3\times 3}$ such that
\begin{align*}
\left(
                         \begin{array}{c}
                           \partial_{\mathcal{A}}(x)\\
                           \partial_{\mathcal{A}}(y)\\
                           \partial_{\mathcal{A}}(z)
                         \end{array}
                       \right)=M\left(
                         \begin{array}{c}
                           x^2\\
                           y^2\\
                           z^2
                         \end{array}
                       \right)
\end{align*}
by Theorem \ref{diffstr}. It is easy for one to check that
 the DG Sklyanin algebra $\mathcal{A}$ is just the DG algebra $\mathcal{A}_{\mathcal{O}_{-1}(k^3)}(M)$ in \cite{MWZ}.
 The isomorphism problem and homological properties of $\mathcal{A}_{\mathcal{O}_{-1}(k^3)}(M)$ have been systematically studied there.
 Especially, we have the following interesting lemmas and propositions.

\begin{lem}\cite[Theorem B]{MWZ}
Let $M$ and $M'$ be two matrixes in $M_3(k)$. Then $$\mathcal{A}_{\mathcal{O}_{-1}(k^3)}(M)\cong \mathcal{A}_{\mathcal{O}_{-1}(k^3)}(M')$$ if and only if
there exists $C=(c_{ij})_{3\times 3}\in \mathrm{QPL}_3(k)$ such that
$$M'=C^{-1}M(c_{ij}^2)_{3\times 3},$$
where $\mathrm{QPL}_3(k)$ is the subgroup of $\mathrm{GL}_3(k)$ consisting of quasi-permutation matrixes.
\end{lem}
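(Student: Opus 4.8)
The plan is to identify a DG-algebra isomorphism with the purely combinatorial datum of a graded automorphism of the common underlying algebra that intertwines the two differentials, and then to read off both conditions on the three degree-one generators. Since every morphism of cochain DG algebras is graded of degree $0$, an isomorphism $\phi\colon \mathcal{A}_{\mathcal{O}_{-1}(k^3)}(M)\to\mathcal{A}_{\mathcal{O}_{-1}(k^3)}(M')$ is in particular an isomorphism of the underlying graded algebras, both of which are $\mathcal{O}_{-1}(k^3)=k\langle x,y,z\rangle/(xy+yx,\,yz+zy,\,zx+xz)$, and it must satisfy $\phi\circ\partial_M=\partial_{M'}\circ\phi$. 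As $\mathcal{O}_{-1}(k^3)$ is generated in degree $1$, such a $\phi$ is completely determined by a matrix $C=(c_{ij})\in\mathrm{GL}_3(k)$ via $\phi(x_i)=\sum_j c_{ij}x_j$ (writing $x_1,x_2,x_3$ for $x,y,z$); the whole problem is thus to pin down which $C$ occur and how they transform $M$.

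The key step, and the one I expect to be the crux, is to show that the graded automorphisms of $\mathcal{O}_{-1}(k^3)$ are precisely the quasi-permutation matrices. Imposing $\phi(x_p)\phi(x_q)+\phi(x_q)\phi(x_p)=0$ for $p\neq q$ and expanding in the degree-two basis $\{x^2,y^2,z^2,xy,yz,zx\}$, every mixed monomial cancels automatically because $x_kx_l=-x_lx_k$ for $k\neq l$, while the coefficient of $x_k^2$ is $2c_{pk}c_{qk}$. Linear independence of the squares forces $c_{pk}c_{qk}=0$ for all $k$ and all $p\neq q$, so each column of $C$ has at most one nonzero entry; invertibility then promotes this to exactly one nonzero entry in each row and column, i.e. $C\in\mathrm{QPL}_3(k)$. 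This rigidity is exactly what the $(-1)$-commutation relations buy us, and it is where a larger, orthogonal-type automorphism group is ruled out.

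It remains to translate the differential condition. Here the useful identity is $\phi(x_i)^2=\bigl(\sum_j c_{ij}x_j\bigr)^2=\sum_j c_{ij}^2\,x_j^2$, valid for the same anticommutation reason. Checking $\phi\circ\partial_M=\partial_{M'}\circ\phi$ on each generator and using $\partial_M(x_i)=\sum_j M_{ij}x_j^2$, one expands both sides into the linearly independent squares $x_k^2$ and equates coefficients; with $P=(c_{ij}^2)$ this collapses to the single matrix equation $MP=CM'$, that is $M'=C^{-1}M(c_{ij}^2)$. This establishes necessity. For sufficiency one simply runs the two computations backwards: if $C\in\mathrm{QPL}_3(k)$ then $\phi(x_i)=\sum_j c_{ij}x_j$ respects the defining relations and hence extends to a graded automorphism, and the identity $M'=C^{-1}M(c_{ij}^2)$ forces $\phi\circ\partial_M=\partial_{M'}\circ\phi$, so $\phi$ is the desired DG isomorphism. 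The only bookkeeping requiring care is the index and transpose convention that makes the matrix equation come out as $C^{-1}M(c_{ij}^2)$ rather than a transposed variant, together with the (automatic) fact that $(c_{ij}^2)$ is again a quasi-permutation matrix, so that the relation is genuinely symmetric in $M$ and $M'$ and defines an equivalence.
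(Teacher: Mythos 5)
This lemma is imported by the paper from \cite[Theorem B]{MWZ} without proof, so there is no in-paper argument to compare against; judged on its own merits, your proposal is a correct and essentially complete proof, and it follows the route the citation implicitly rests on. Your three key computations all check out: the mixed monomials $x_kx_l$ ($k\neq l$) cancel identically when expanding $\phi(x_p)\phi(x_q)+\phi(x_q)\phi(x_p)$, leaving $2c_{pk}c_{qk}=0$ for all $k$ and $p\neq q$, which together with invertibility forces $C\in\mathrm{QPL}_3(k)$; the anticommutation identity $\bigl(\sum_j c_{ij}x_j\bigr)^2=\sum_j c_{ij}^2x_j^2$ holds for the same reason; and equating coefficients of the linearly independent squares in $\phi\circ\partial_M=\partial_{M'}\circ\phi$ on generators yields exactly $M(c_{ij}^2)=CM'$, i.e.\ $M'=C^{-1}M(c_{ij}^2)$, with the orientation of the matrix equation matching the statement. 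The only step left implicit is that checking $\phi\circ\partial_M=\partial_{M'}\circ\phi$ on the degree-one generators suffices: both composites satisfy the same $\phi$-twisted Leibniz rule, so agreement on generators propagates to the whole algebra; this is standard but worth one line. Your closing remark that $(c_{ij}^2)$ is again a quasi-permutation matrix, with $\bigl((C^{-1})_{ij}^2\bigr)=\bigl((c_{ij}^2)\bigr)^{-1}$, correctly makes the relation symmetric, so the ``if'' direction and the equivalence-relation structure both come out of the same computation run backwards.
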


It is proved that each $\mathcal{A}_{\mathcal{O}_{-1}(k^3)}(M)$ is a Koszul. When it comes to the Calabi-Yau and homologically smooth properties,
 we have the following proposition by \cite[Theorem C]{MWZ}\label{ncy} and \cite[Theroem 5.3]{MR}.
\begin{prop}\label{quancase}
Let $\mathcal{A}$ be a connected cochain DG algebra such that $\mathcal{A}^{\#}=S_{a,a,0}$,  $a\in k^{\times}$ and $\partial_{\mathcal{A}}$ is determined by a matrix $N\in M_3(k)$ with
\begin{align*}
\left(
                         \begin{array}{c}
                           \partial_{\mathcal{A}}(x)\\
                           \partial_{\mathcal{A}}(y)\\
                           \partial_{\mathcal{A}}(z)
                         \end{array}
                       \right)=N\left(
                         \begin{array}{c}
                           x^2\\
                           y^2\\
                           z^2
                         \end{array}
                       \right).
\end{align*}
Then $\mathcal{A}$ is Koszul, and it is not Calabi-Yau (or not homologically smooth, or not Gorenstein) if and only if $\partial_{\mathcal{A}}$ satisfies the condition $(\clubsuit)$:
there exists some  $C=(c_{ij})_{3\times 3}\in \mathrm{QPL}_3(k)$ satisfying $N=C^{-1}M(c_{ij}^2)_{3\times 3}$,
where $$M=\left(
                                 \begin{array}{ccc}
                                   1 & 1 & 0 \\
                                   1 & 1 & 0 \\
                                   1 & 1 & 0 \\
                                 \end{array}
                               \right)\quad \text{or}\quad \left(
                                 \begin{array}{ccc}
                                   m_{11} & m_{12} & m_{13} \\
                                   l_1m_{11} & l_1m_{12} & l_1m_{13} \\
                                   l_2m_{11} & l_2m_{12} & l_2m_{13} \\
                                 \end{array}
                               \right)$$ with $m_{12}l_1^2+m_{13}l_2^2\neq m_{11}, l_1l_2\neq 0$ and $4m_{12}m_{13}l_1^2l_2^2= (m_{12}l_1^2+m_{13}l_2^2-m_{11})^2$.
                               For the second case,
                               neither $m_{12}m_{11}< 0$ nor $m_{13}m_{11}< 0$ will occur.
 Furthermore,
\begin{enumerate}
\item if $m_{11}=0$, then $m_{12}l_1=m_{13}l_2$ and
$\mathcal{A}_{\mathcal{O}_{-1}(k^3)}(M)$ is isomorphic to $\mathcal{A}_{\mathcal{O}_{-1}(k^3)}(X)$, where $$X=\left(
                                 \begin{array}{ccc}
                                   0 & m_{12} & m_{12} \\
                                   0 & l_1m_{12} & l_1m_{12} \\
                                   0 & l_2\sqrt{m_{12}m_{13}} & l_2\sqrt{m_{12}m_{13}} \\
                                 \end{array}
                               \right);$$
\item if $m_{11}m_{12}>0, m_{11}m_{13}>0$ then $\mathcal{A}_{\mathcal{O}_{-1}(k^3)}(M)$  is isomorphic to $\mathcal{A}_{\mathcal{O}_{-1}(k^3)}(Q)$, where $$Q=\left(
                                 \begin{array}{ccc}
                                   m_{11}\sqrt{m_{12}m_{13}} & m_{11}\sqrt{m_{12}m_{13}} & m_{11}\sqrt{m_{12}m_{13}} \\
                                   l_1m_{12}\sqrt{m_{11}m_{13}} & l_1m_{12}\sqrt{m_{11}m_{13}} & l_1m_{12}\sqrt{m_{11}m_{13}} \\
                                   l_2m_{13}\sqrt{m_{11}m_{12}} & l_2m_{13}\sqrt{m_{11}m_{12}} & l_2m_{13}\sqrt{m_{11}m_{12}} \\
                                 \end{array}
                               \right).$$
\end{enumerate}
\end{prop}
\begin{rem}
For briefness, we say that $\partial_{\mathcal{A}}$ satisfies $(\spadesuit)$ if the condition $(\clubsuit)$ doesn't holds. Note that the differential of $\mathcal{A}_{\mathcal{O}_{-1}(k^3)}(X)$ in Proposition \ref{ncy}(1) is defined by
\begin{align*}
\begin{cases}
\partial_{\mathcal{A}}(x_1)=m_{12}(x_2^2+x_3^2)\\
\partial_{\mathcal{A}}(x_2)=l_1m_{12}(x_2^2+x_3^2)\\
\partial_{\mathcal{A}}(x_3)=l_2\sqrt{m_{12}m_{13}}(x_2^2+x_3^2),
\end{cases}
\end{align*}
where $l_1m_{12}=l_2m_{13}, l_1,l_2,m_{12},m_{13}\in k^{\times}$. Let $l_1=l_2=m_{12}=m_{13}=1$. Then $X=\left(
                           \begin{array}{ccc}
                             0 & 1 & 1 \\
                             0 & 1 & 1 \\
                             0 & 1 & 1 \\
                           \end{array}
                         \right)$ and we get a simple example of $3$-dimensional DG Sklyanin algebra, which is not  homologically smooth but Koszul.
Similarly, the differential of $\mathcal{A}_{\mathcal{O}_{-1}(k^3)}(Q)$ in Proposition \ref{ncy}(2) is defined by
\begin{align*}
\begin{cases}
\partial_{\mathcal{A}}(x_1)= m_{11}\sqrt{m_{12}m_{13}}(x_1^2+x_2^2+x_3^2)\\
\partial_{\mathcal{A}}(x_2)=l_1m_{12}\sqrt{m_{11}m_{13}}(x_1^2+x_2^2+x_3^2)\\
\partial_{\mathcal{A}}(x_3)=l_2m_{13}\sqrt{m_{11}m_{12}}(x_1^2+x_2^2+x_3^2)
\end{cases}
\end{align*}
where $m_{12}m_{13},m_{11}m_{13},m_{11}m_{12}>0, l_1l_2\neq 0$ and $$4m_{12}m_{13}l_1^2l_2^2= (m_{12}l_1^2+m_{13}l_2^2-m_{11})^2.$$ For example, let $l_1=m_{11}=m_{12}=m_{13}=1, l_2=2$, then $Q=\left(
                           \begin{array}{ccc}
                             1 & 1 & 1 \\
                             1 & 1 & 1 \\
                             2 & 2 & 2 \\
                           \end{array}
                         \right)$ and $\mathcal{A}_{\mathcal{O}_{-1}(k^3)}(Q)$ is a simple example of $3$-dimensional DG Sklyanin algebra, which is not homologically smooth but Koszul.
\end{rem}

By the discussion above, one sees that almost all $3$-dimensional DG Sklyanin algebras are Calabi-Yau DG algebras except a few special cases. More precisely, we have the following theorem.
\begin{thm}\label{finthm}
Let $\mathcal{A}$ be a $3$-dimensional DG Sklyanin algebra with $\mathcal{A}^{\#}=S_{a,b,c}$.  Then $\mathcal{A}$ is not Calabi-Yau (or not homologically smooth) if and only if one of the two following conditions holds.
\begin{enumerate}
\item $a=-b, c=0$ and $\partial_{\mathcal{A}}\neq 0$;
\item $a=b, c=0$ and $\partial_{\mathcal{A}}$ satisfies $(\clubsuit)$.
\end{enumerate}
\end{thm}
\begin{proof}
The `if' part is trivial by Proposition \ref{polydg} and Proposition \ref{ncy}. We only need to show the `only if' part.
If the $3$-dimensional DG Sklyanin algebra $\mathcal{A}$ is not Calabi-Yau, then $a=b, c=0$ or $a=-b,c=0$ by Proposition \ref{firstcase}.
For the case that $a=-b,c=0$, we have $\partial_{\mathcal{A}}=0$ by Proposition \ref{polydg}. When $a=b, c=0$,
 Theorem \ref{diffstr} indicates that $\partial_{\mathcal{A}}$ is determined by a matrix $N\in M_3(k)$ such that
\begin{align*}
\left(
                         \begin{array}{c}
                           \partial_{\mathcal{A}}(x)\\
                           \partial_{\mathcal{A}}(y)\\
                           \partial_{\mathcal{A}}(z)
                         \end{array}
                       \right)=N\left(
                         \begin{array}{c}
                           x^2\\
                           y^2\\
                           z^2
                         \end{array}
                       \right).
\end{align*}
By Proposition \ref{ncy}, there exists some  $C=(c_{ij})_{3\times 3}\in \mathrm{QPL}_3(k)$ satisfying $N=C^{-1}M(c_{ij}^2)_{3\times 3}$,
where $$
 M=\left(
                                 \begin{array}{ccc}
                                   1 & 1 & 0 \\
                                   1 & 1 & 0 \\
                                   1 & 1 & 0 \\
                                 \end{array}
                               \right)\quad \text{or}\quad
M=\left(
                                 \begin{array}{ccc}
                                   m_{11} & m_{12} & m_{13} \\
                                   l_1m_{11} & l_1m_{12} & l_1m_{13} \\
                                   l_2m_{11} & l_2m_{12} & l_2m_{13} \\
                                 \end{array}
                               \right)$$ with $m_{12}l_1^2+m_{13}l_2^2\neq m_{11}, l_1l_2\neq 0$ and $4m_{12}m_{13}l_1^2l_2^2= (m_{12}l_1^2+m_{13}l_2^2-m_{11})^2$.
\end{proof}

It is well-known that Calabi-Yau property of a connected cochain DG algebra implies its Gorenstein property and homologically smoothness.
We can finish this section with the following tabular as a summary on the homological properties of $3$-dimensional DG Sklyanin algebras.
\\
\begin{tabular}{l|llll}\hline
\backslashbox{cases}{properties}& Koszul & Gorenstein & homologically smooth & Calabi-Yau \\
 \hline
 $|a|\neq |b|$ or $c\neq 0$ & \Checkmark & \Checkmark &\Checkmark & \Checkmark \\
 $a=-b,c=0, \partial_{\mathcal{A}}=0$ & \Checkmark & \Checkmark &\Checkmark& \Checkmark  \\
 $a=-b,c=0,\partial_{\mathcal{A}}\neq 0$ & \XSolid &\Checkmark &\XSolid & \XSolid\\
 $a=b,c=0$ and $(\clubsuit)$ & \Checkmark & \XSolid & \XSolid &\XSolid \\
 $a=b,c=0$ and $(\spadesuit)$ & \Checkmark &\Checkmark &\Checkmark &\Checkmark
 \\
 \hline
\end{tabular}

\section*{Acknowledgments}
X.-F. Mao was supported by NSFC (Grant No.11871326). X.-T. Wang was supported by Simons Foundation
Program: Mathematics and Physical Sciences-Collaboration Grants for Mathematician (Award No.688403). The authors thank Professor James Zhang for his useful suggestions and comments on this paper.

\end{document}